\title{The trace embedding lemma and spinelessness}
\author[Hayden]{Kyle Hayden} \address{Columbia University, New York, NY 10027} 
\email{hayden@math.columbia.edu}
\author[Piccirillo]{Lisa Piccirillo} 
\address{Brandeis University, Waltham, MA 02453}
\email{piccirli@mit.edu}
\theoremstyle{plain}
\newtheorem{thm}{Theorem}[section]   \newtheorem{lem}[thm]{Lemma}          \newtheorem{prop}[thm]{Proposition}
\newtheorem*{ques*}{Question}
\newtheorem{mainthm}{Theorem}[section]   
\newtheorem{maincor}{Corollary}[mainthm]
\theoremstyle{definition}
\newtheorem{defn}[thm]{Definition}    
\newtheorem{rem}[thm]{Remark}       \newtheorem*{rem*}{Remark}             
\newcommand{\cc}{\mathbb{C}}
\newcommand{\rr}{\mathbb{R}}
\newcommand{\zz}{\mathbb{Z}}
\newcommand{\st}{{\mathrm{st}}}
\newcommand{\lk}{\mathrm{lk}}
\renewcommand{\S}{\textsection}
\newcommand{\cee}{\mathbb{C}}
\newcommand{\cpbar}{\overline{\cee {P}^2}}
\newcommand{\cp}{\cee {P}^2}
\begin{document}

\begin{abstract}
We demonstrate new applications of the trace embedding lemma to the study of piecewise-linear surfaces and the detection of exotic phenomena in dimension four. We provide infinitely many pairs of homeomorphic 4-manifolds $W$ and $W'$ homotopy equivalent to $S^2$ which have smooth structures distinguished by several formal properties: $W'$ is diffeomorphic to a knot trace but $W$ is not, $W'$ contains $S^2$ as a smooth spine but $W$ does not even contain $S^2$ as a piecewise-linear spine, $W'$ is geometrically simply connected but $W$ is not, and $W'$ does not admit a Stein structure but $W$ does.  In particular, the simple spineless 4-manifolds $W$ provide an alternative to Levine and Lidman's recent solution to Problem 4.25 in Kirby's list. We also show that all smooth 4-manifolds contain topological locally flat surfaces that cannot be approximated by piecewise-linear surfaces. 
\end{abstract}

\maketitle

\vspace{-.6cm}
\section{Introduction}

In 1957, Fox and Milnor observed that a knot $K \subset S^3$ arises as the link of a singularity of a piecewise-linear 2-sphere in $S^4$  with one singular point if and only if $K$ bounds a smooth disk in $B^4$ \cite{fox-milnor:abstract,fox-milnor}; such knots are now called \emph{slice}. Any such 2-sphere has a neighborhood diffeomorphic to the \emph{zero-trace} of $K$, where the \emph{$n$-trace} is the 4-manifold $X_n(K)$ obtained from $B^4$ by attaching an $n$-framed 2-handle along $K$. In this language, Fox and Milnor's observation says that a knot $K \subset S^3$ is slice if and only if $X_0(K)$ embeds smoothly in $S^4$ (cf~\cite{kirby-melvin,miller-picc}). This fact, known as the \emph{trace embedding lemma},  can be combined with work of Freedman \cite{freedman} and Donaldson \cite{donaldson} to give an elegant proof that $\rr^4$ supports exotic smooth structures (cf~\cite[p.522]{GompfStipsicz4}), and it also gives rise to a powerful sliceness obstruction \cite{picc:conway}. 

In this paper, we give new applications which demonstrate how natural extensions of the trace embedding lemma can be parlayed into a tool for detecting exotic smooth structures on small, compact 4-manifolds while also constraining their handle structures and piecewise-linear topology.

\begin{mainthm}\label{thm:main}
For all $n \in \zz$, there exist infinitely many pairs of homeomorphic smooth 4-manifolds $W$ and $W'$ such that $W'$ is the $n$-trace of a slice knot $K \subset S^3$, yet $W$ is not diffeomorphic to the trace of any knot in $S^3$.
\end{mainthm}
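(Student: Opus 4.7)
The plan is to produce, for each $n$, infinitely many pairs $(W,W')$ by combining the trace embedding lemma (TEL) with a Khovanov- or Heegaard-Floer-theoretic sliceness obstruction. The construction side gives a candidate $W$ that is manifestly homeomorphic to $W' = X_n(K)$, while the obstruction side rules out \emph{any} smooth trace presentation of $W$---not just an $n$-framed one.

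For the construction, I would fix a slice knot $K \subset S^3$, set $W' := X_n(K)$, and produce $W$ by modifying $W'$ through a topologically trivial operation: for example, a cork twist along an embedded cork in $W'$, a Gluck twist along an embedded $S^2$, or the insertion of a cancelling pair consisting of one $1$-handle and one $3$-handle whose cancellation is only topological. Each such $W$ is homeomorphic to $W'$ by Freedman together with the topological triviality of the modification, but carries a potentially distinct smooth structure.

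The heart of the argument is to show that, for a suitable choice of modification, the resulting $W$ is not diffeomorphic to $X_m(J)$ for any framing $m \in \zz$ and any knot $J \subset S^3$. Supposing it were, the cocore of the $2$-handle of such a presentation would be a smooth properly embedded disk $D \subset W$ with $\partial D \subset \partial W$ of framing $m$ and $W \setminus \nu(D) \cong B^4$. Applying the TEL in an ambient form, one can glue $D$ to the complementary $B^4$ and extract a smooth slice disk in $B^4$ (or a smooth embedded $S^2$ in some closed $4$-manifold) for a knot determined intrinsically by $W$ and $D$; the construction of $W$ is then engineered so that this auxiliary knot has nonzero $s$- or $\tau$-invariant, giving a contradiction.

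The main obstacle is making this contradiction \emph{uniform} in $(m, J)$, so that every conceivable smooth trace structure on $W$ is excluded simultaneously rather than just a single candidate---this is the genuinely new application of the TEL promised in the abstract. Finally, to produce infinitely many pairs per $n$, I would vary $K$ through an infinite family of slice knots (e.g., connect sums with topologically slice knots, or satellites of a fixed slice knot) distinguished by Heegaard Floer correction terms on their $n$-surgeries, so that the resulting pairs are pairwise non-diffeomorphic.
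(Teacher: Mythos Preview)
Your proposal has the right shape on the construction side---the paper also uses a cork twist along the Mazur cork---but the obstruction side has a genuine gap, and this is where the paper's new idea lives.

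The problem is your ``auxiliary knot determined intrinsically by $W$ and $D$.'' If $W \cong X_m(J)$, the cocore disk $D$ has complement $B^4$ and is bounded by a meridian of $J$; the knot you recover from this data is $J$ itself. So your proposed obstruction reduces to showing that \emph{every} knot $J$ with $X_m(J) \cong W$ has nonzero $s$ or $\tau$. But $J$ is not determined by $W$: there is no single auxiliary knot whose concordance invariants you can compute in advance, and you acknowledge exactly this as ``the main obstacle'' without resolving it. A knot-by-knot invariant like $s$ or $\tau$ cannot give a uniform obstruction over all hypothetical $J$.

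The paper's mechanism is different and avoids this issue entirely. It builds $W$ \emph{first}, as a Stein domain with prescribed Chern class, by attaching a handle to the Mazur cork along a Legendrian knot with carefully arranged $tb$ and rotation number; it also arranges that $W$ embeds smoothly in $\#_n\cp$ (or $S^4$ when $n=0$). Now suppose $X_n(J) \hookrightarrow W$ induces an isomorphism on $H_2$. The embedding $W \hookrightarrow \#_n\cp$ plus the trace embedding lemma force $J$ to bound a low-genus surface in $B^4 \#_n \cpbar$, which in turn produces a smooth low-genus surface in $W$ (or in $W \#_n \cpbar$) representing a generator. The Stein adjunction inequality, with the Chern class made large by construction, then rules out any such surface---independently of $J$. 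Only afterward does one cork-twist $W$ to obtain $W'$ and verify by handle calculus that $W'$ is $X_n(K)$ for a slice $K$. Your order of operations (start with arbitrary slice $K$, then twist) gives no control over a Stein structure or an ambient embedding of the twisted manifold, so neither half of this argument is available.
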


We apply Theorem~\ref{thm:main} in three directions: (1) the study of piecewise-linear surfaces in 4-manifolds, (2) the handle decompositions of simply connected 4-manifolds, and (3) the Stein fillings of 3-manifolds obtained by surgery on slice knots in $S^3$.

First, recall that a closed surface $\Sigma$ is called a \emph{spine} of a smooth 4-manifold $X$ if there exists an embedding $\Sigma \hookrightarrow X$ which is a homotopy equivalence. Since each 4-manifold $W'$ from Theorem~\ref{thm:main} is the trace of a slice knot, $W'$ contains $S^2$ as a smoothly embedded spine; this $S^2$ is the union of the slice disk and the core of the 2-handle. In contrast, we use the trace embedding lemma and an adjunction inequality to prove that $W$ cannot even contain $S^2$  as a piecewise-linear spine; we discuss this strategy further in Theorem~\ref{thm:spineless} below.

\begin{maincor}\label{cor:spineless}
Each $W$ is homotopy equivalent to $S^2$ and  contains $S^2$ as a topological locally flat spine but not as a piecewise-linear spine.
\end{maincor}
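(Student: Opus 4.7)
The plan breaks into three parts, one for each assertion of the corollary.

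First, I would establish $W \simeq S^2$ and the existence of a topological locally flat spine. Since $W$ is homeomorphic to $W'$ by Theorem~\ref{thm:main}, and the $n$-trace $W'$ of the slice knot $K$ deformation retracts onto the smoothly embedded $2$-sphere obtained by capping the core of the $2$-handle with a smooth slice disk for $K$ in $B^4$, we have $W' \simeq S^2$ and hence $W \simeq S^2$. Any homeomorphism $W' \to W$ pushes this smooth spine forward to a topological locally flat $S^2$ spine in $W$.

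For the non-existence of a PL spine I would argue by contradiction: suppose $\Sigma \subset W$ is a PL embedded $S^2$ representing a generator of $H_2(W) \cong \zz$. A PL $2$-sphere in a smooth $4$-manifold is locally flat away from finitely many isolated cone-point singularities, each modeled on the cone over a knot $K_i \subset S^3$; after an ambient isotopy sliding them together along arcs on $\Sigma$, one may assume $\Sigma$ has a single cone point at the connected sum $K = K_1 \# \cdots \# K_m$. The complement in $\Sigma$ of a small ball $B$ around the cone point is then a smoothly embedded disk in $W \setminus B$ bounded by $K \subset \partial B = S^3$, with self-intersection $n$. By a PL extension of the trace embedding lemma --- the content of Theorem~\ref{thm:spineless} below --- this forces $W$ itself to be diffeomorphic to the $n$-trace $X_n(K)$ of the smoothly slice knot $K$, directly contradicting Theorem~\ref{thm:main}.

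The main obstacle is the last step: passing from a PL $S^2$ spine to a diffeomorphism $W \cong X_n(K)$, rather than merely an embedding $X_n(K) \hookrightarrow W$. The authors signal that this passage requires an adjunction inequality in addition to the trace embedding lemma --- presumably to rule out the possibility that the complement of a regular neighborhood of $\Sigma$ contributes nontrivial topology that would prevent the whole of $W$ from being a knot trace. I would treat Theorem~\ref{thm:spineless} as a black box for this corollary and let Corollary~\ref{cor:spineless} follow as an immediate application of it together with Theorem~\ref{thm:main}.
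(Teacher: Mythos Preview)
Your first paragraph matches the paper exactly: the smooth spine in $W'=X_n(K)$ is the union of the slice disk and the core of the 2-handle, and the homeomorphism transports it to a locally flat spine in $W$.

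The gap is in the second part. From a PL 2-sphere $\Sigma$ generating $H_2(W)$ you correctly extract a single cone point on a knot $K'=\#_i K_i$ and a smooth disk bounded by $K'$ outside a small ball. But this data yields only a \emph{regular neighborhood} of $\Sigma$ diffeomorphic to $X_n(K')$, i.e.\ an embedding $X_n(K')\hookrightarrow W$ inducing an isomorphism on $H_2$; it does \emph{not} force $W\cong X_n(K')$. The complement of this trace in $W$ is a priori only an integer homology cobordism from $\partial X_n(K')$ to $\partial W$, and nothing you have written rules out its being nontrivial. (There is also no reason for your $K'$ to be slice.) Consequently the literal statement of Theorem~\ref{thm:main}---that $W$ is not \emph{diffeomorphic} to any trace---is not directly contradicted.

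The paper handles this exactly by strengthening Theorem~\ref{thm:main}: the proof of Corollary~\ref{cor:spineless} cites Theorem~\ref{thm:detail}(c), which asserts that no genus-$0$ trace \emph{embeds} in $W$ inducing an isomorphism on $H_2$. The adjunction inequality you mention is used inside the proof of Theorem~\ref{thm:detail}(c), not to upgrade an embedding to a diffeomorphism in the corollary. Your fallback---treating Theorem~\ref{thm:spineless} as a black box---is circular here (Corollary~\ref{cor:spineless} is its $g=0$ case), and in any event Theorem~\ref{thm:spineless} is not a ``PL extension of the trace embedding lemma''; it is the higher-genus analogue of the very corollary you are trying to prove.
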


This provides an alternative solution to Problem 4.25 in Kirby's list \cite{kirby}, which was recently resolved by Levine and Lidman \cite{levine-lidman:spineless}. Levine and Lidman produced examples of smooth, compact 4-manifolds 
that are homotopy equivalent to $S^2$ but do not contain $S^2$ as a PL spine, as detected using the $d$-invariants in Heegaard Floer homology. (Subsequently Kim and Ruberman used surgery theory to show that infinitely many of the examples from \cite{levine-lidman:spineless} contain $S^2$ as a topological spine with cone points.)  In fact, the argument used by Levine and Lidman shows that no smooth 4-dimensional homotopy 2-sphere with the same boundary and intersection form as the examples from \cite{levine-lidman:spineless}  can contain $S^2$ as a PL spine.  In contrast, our results show that the existence of spines depends on the smooth structure; each of our spineless 4-manifolds is homeomorphic to a 4-manifold that admits a smooth spine. 

Next, recall that a simply connected 4-manifold is said to be \emph{geometrically simply connected} if it admits a handle decomposition without 1-handles \cite[Problem 4.18]{kirby}. It remains an important open problem to determine whether every closed, simply connected 4-manifold is geometrically simply connected; see~\cite[Problems~4.88-4.89]{kirby} regarding certain homotopy 4-balls and the 4-dimensional Poincar\'e conjecture. Among manifolds with boundary, work of Gordon \cite{gordon:ribbon} implies that a contractible 4-manifold with non-simply-connected boundary is never geometrically simply connected; see also \cite[p.8]{kirby:4-manifolds} and \cite{dlvvw19}. Levine and Lidman's results show that there are 3-manifolds $Y$  such that every smooth 4-dimensional homotopy 2-sphere with boundary $Y$ and positive intersection form  requires 1-handles \cite[Remark~1.2]{levine-lidman:spineless}. We give the first examples demonstrating that, as expected, geometric simple-connectivity can depend on the smooth structure:

\begin{maincor}\label{cor:gsc}
There exist infinitely many pairs of compact, homeomorphic 4-manifolds $W$ and $W'$ such that $W'$ is geometrically simply connected but $W$ is not.
\end{maincor}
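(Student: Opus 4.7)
First, by Theorem~\ref{thm:main}, $W'$ is the $n$-trace $X_n(K)$ of a slice knot, which admits the obvious handle decomposition with a single $0$-handle and a single $2$-handle; it has no $1$-handles, so $W'$ is geometrically simply connected.

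For $W$, I would suppose for contradiction that $W$ admits a handle decomposition without $1$-handles. Since $W$ is connected with nonempty boundary, one may further arrange a single $0$-handle and no $4$-handles, obtaining
$$W \;=\; h^0 \,\cup\, \bigcup_{i=1}^{k} h_i^2 \,\cup\, \bigcup_{j=1}^{\ell} h_j^3.$$
The Euler characteristic $\chi(W)=\chi(S^2)=2$ forces $\ell=k-1$, and since $H_2(W)\cong\zz$ with $H_1(W)=H_3(W)=0$, the cellular boundary map $\partial_3\colon \zz^{k-1}\to \zz^k$ is injective with cokernel $\zz$.

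The plan is to reduce this decomposition to the minimal form $h^0\cup h^2$ by Kirby moves; such a simplification would make $W$ the trace of a knot in $S^3$, contradicting Theorem~\ref{thm:main}. Handle slides among the $2$-handles and among the $3$-handles realize arbitrary integer row and column operations on the matrix of $\partial_3$, and since the cokernel is $\zz$, its Smith normal form is a $(k-1)\times(k-1)$ identity block over a zero row. After performing sufficient slides, each $3$-handle $h_j^3$ algebraically cancels a distinct $2$-handle $h_j^2$.

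The main obstacle is upgrading these $k-1$ algebraic cancellations to geometric ones: each cancellation requires the attaching $S^2$ of $h_j^3$ to meet the belt circle of $h_j^2$ transversely in a single point inside $\partial X$, where $X$ denotes the $2$-handlebody portion of the decomposition. Because $X$ is simply connected (it has no $1$-handles) and excess intersections occur in oppositely signed pairs, I would appeal to a Whitney-type argument -- carried out directly in the $3$-manifold $\partial X$ or via further handle slides in the simply connected ambient $4$-manifold -- to remove these extra points. Iterating the $2$–$3$ cancellation $(k-1)$ times leaves $W$ with a single $0$-handle and a single $2$-handle, so $W$ is the $n$-trace of some knot in $S^3$, contradicting Theorem~\ref{thm:main}.
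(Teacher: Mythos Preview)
Your treatment of $W'$ is fine, and your reduction for $W$ is correctly set up through the Smith normal form step. The gap is precisely where you flag it: upgrading the $k-1$ algebraic $2$--$3$ cancellations to geometric ones. The ``Whitney-type argument'' you invoke is not available here. The attaching $2$-sphere of a $3$-handle and the belt circle of a $2$-handle meet inside the $3$-manifold $\partial X$; removing a pair of excess intersection points requires an embedded Whitney disk in $\partial X$ (or in the $4$-manifold) whose interior avoids both submanifolds. Simple connectivity of $X$ says nothing about $\pi_1(\partial X)$, and even when the Whitney circle is null-homotopic, the Whitney disk is $2$-dimensional in an ambient $4$-manifold and will generically meet the attaching $2$-sphere --- this is exactly the well-known failure of the Whitney trick in dimension four. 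Establishing that every geometrically simply connected homotopy $S^2$ is actually a knot trace would be a substantial theorem in its own right, and it is not known.

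The paper avoids this entirely by proving something weaker but sufficient: if $W$ has no $1$-handles, then the generator of $H_2(W)$ is represented by a \emph{piecewise-linear} $2$-sphere. One takes parallel copies of cores of the $2$-handles (with appropriate signs and multiplicities) representing the class, connects their boundaries by bands in $\partial h^0 = S^3$ to obtain a single smooth disk, and then cones off its boundary inside the $0$-handle $B^4$. This is an elementary construction requiring no cancellation at all. The resulting PL sphere has a neighborhood diffeomorphic to a knot trace whose inclusion induces an isomorphism on $H_2$, and this is ruled out by the strengthened form of Theorem~\ref{thm:main} (Theorem~\ref{thm:detail}(c)), equivalently by Corollary~\ref{cor:spineless}. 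In short: rather than simplifying the handle decomposition of $W$, one only needs to find a PL spine, and that is immediate from the absence of $1$-handles.
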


As a third application of Theorem~\ref{thm:main}, we obtain new types of  Stein fillings of 3-manifolds given by surgery on knots in $S^3$. In \cite{conway-etnyre-tosun}, Conway, Etnyre, and Tosun classified the (strong) symplectic fillings of any contact 3-manifold obtained by contact $+1$-surgery on a Legendrian knot $K$ in the standard contact $S^3$: such fillings are precisely the exteriors of Lagrangian disks bounded by $K$ in the standard symplectic $B^4$, and thus have the homology type of $S^1\times B^3$. In particular, for such a filling to exist, the  smooth knot type of $K$ must be slice and the underlying surgery 3-manifold must be $S^3_0(K)$.  Note that this 3-manifold naturally bounds another smooth 4-manifold: the zero-trace $X_0(K)$. However, when $K$ is smoothly slice, $X_0(K)$ can \emph{never} be a symplectic filling of \emph{any} contact structure on its boundary   $S^3_0(K)$; this follows from a standard argument using symplectic caps \cite{eliashberg:caps,etnyre:caps,km:p} and an  adjunction inequality \cite{km:thom, mst:product, fintushel1stern:immersed}. Nevertheless, we complement the classification in \cite{conway-etnyre-tosun} by showing that there exist Lagrangian slice knots $K$ such that $S^3_0(K)$ possesses a second fillable contact structure, one filled by a Stein domain \emph{homeomorphic} to $X_0(K)$.  To the authors' knowledge, these provide the first examples in which zero-surgery on a slice knot $K \subset S^3$ admits a symplectic filling with the homology type of $S^2 \times D^2$.

\begin{maincor}\label{cor:lag}
There exist infinitely many slice knots $K \subset S^3$ such that $S^3_0(K)$ bounds at least two homologically distinct Stein domains in $\cc^2$: (1) an exotic copy of $X_0(K)$ that is not diffeomorphic to any knot trace, and (2) the exterior of a Lagrangian disk in $B^4$.
\end{maincor}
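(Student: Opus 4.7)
The plan is to specialize Theorem~\ref{thm:main} to slice knots carrying additional symplectic structure and to verify that the resulting exotic trace $W$ inherits a Stein structure embedded in $\cc^2$. The argument proceeds in three steps, with the bulk of the work in the second.

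First, I would arrange the slice knots $K$ in Theorem~\ref{thm:main} to be \emph{Lagrangian slice}: to admit a Legendrian representative in $(S^3,\xi_\st)$ bounding a Lagrangian disk $L \subset (B^4,\omega_\st) \subset \cc^2$. The family produced by Theorem~\ref{thm:main} should be flexible enough to arrange this for infinitely many $K$, either directly from the construction or by substituting nearby Lagrangian slice knots without disturbing the rest of the argument. Given such a $K$, the exterior $E = B^4 \setminus \nu(L)$ is a Stein subdomain of $\cc^2$, and by \cite{conway-etnyre-tosun} it fills a distinguished contact structure $\xi_E$ on $S^3_0(K)$. This provides filling~(2), with $H_*(E) \cong H_*(S^1)$.

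Second, I would show that the exotic $W$ from Theorem~\ref{thm:main} admits a Stein structure realizing a Stein domain in $\cc^2$. Since the trace embedding lemma places $W$ naturally inside the ambient $\cc^2 \subset S^4$, I would carry out the construction compatibly with the standard complex structure, so that the defining pieces of $W$ can be chosen to be complex analytic away from finitely many singular points and a plurisubharmonic exhausting function on $\cc^2$ restricts to a Stein defining function on $W$. This gives filling~(1), with $H_*(W) \cong H_*(S^2)$ by Corollary~\ref{cor:spineless}, hence homologically distinct from $E$. Since Theorem~\ref{thm:main} provides infinitely many such $W$, the corollary follows.

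The main obstacle will be Step~2: the exotic $W$ is a priori only a smooth 4-manifold homeomorphic to $X_0(K)$, and realizing it as a Stein subdomain of $\cc^2$ requires upgrading the trace embedding lemma from the smooth to the complex-analytic category. A complementary route, which I would pursue in parallel, uses Eliashberg's characterization of Stein 4-manifolds via handle decompositions with Weinstein-attached 2-handles at Thurston-Bennequin-minus-one framings, together with a separate construction of $W$ as a sublevel set of a strictly plurisubharmonic function on $\cc^2$. Note that Corollary~\ref{cor:gsc} forces any Stein decomposition of $W$ to include 1-handles, consistent with the slice-Bennequin inequality obstructing a Legendrian slice representative of $K$ from serving as the attaching circle of a single 0-framed Stein 2-handle.
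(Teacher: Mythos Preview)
Your outline has the right two fillings in view, but Step~2 as written is not a proof, and the strategy you describe would not close the gap. Making the smooth embedding $W\hookrightarrow S^4\subset\cc^2$ ``complex analytic away from finitely many singular points'' is not something you can arrange by hand, and restricting a global plurisubharmonic exhaustion does not produce a Stein defining function unless the smooth hypersurface $\partial W$ is already pseudoconvex, which is exactly what is in question. The actual mechanism the paper uses is Gompf's embedding theorem \cite[Theorem~2.1]{gompf:image}: if a Stein domain $W$ embeds smoothly in $\cc^2$ and the almost-complex structure induced by the embedding is homotopic to the one underlying the Stein structure, then the image is smoothly isotopic to a Stein subdomain. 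Since $H^2(W;\zz)$ has no $2$-torsion, this homotopy check reduces to comparing orientations and first Chern classes, and the latter requires an honest computation: one must verify $c_1(W)=0$ from the rotation numbers in the Stein handle diagram, matching $\iota^*c_1(\cc^2)=0$. You never mention this Chern-class verification, and without it Gompf's theorem does not apply. Note also that this imposes a real constraint on which of the $W$'s from Theorem~\ref{thm:main} can be used: the general family in \S\ref{sec:fake-traces} is built precisely so that $\langle c_1(W),\alpha\rangle$ is large, so you need the specific small example of \S\ref{sec:example} (or a modification of it), not an arbitrary member of the family.

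For Step~1, your plan to ``arrange $K$ to be Lagrangian slice'' is in the right direction but too soft. The paper does not assume this; it exhibits the Lagrangian disk explicitly by rewriting the handle diagram of $W'$ so that one $2$-handle attaching curve becomes a standard Legendrian unknot $B'$ in $\partial(S^1\times B^3)$ bounding a standard Lagrangian disk there, and the other is attached at $tb-1$ so that the result is a Stein $B^4$. Carving the disk then gives the Stein exterior $W''$ with $H_*(W'')\cong H_*(S^1)$, and Gompf's theorem is invoked a second time to place $W''$ in $\cc^2$. The infinite family comes from connect-summing $B'$ with further Lagrangian-slice Legendrians, not from the general Theorem~\ref{thm:main} machinery.
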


Working now in greater generality we extend the construction underlying Theorem~\ref{thm:main} and invoke the trace embedding lemma for higher genus surfaces (Lemma \ref{lem:traceglory}) to obtain the following generalization of Corollary~\ref{cor:spineless}:

\begin{mainthm}\label{thm:spineless}
For any closed, orientable surface $\Sigma$, there exist infinitely many compact 4-manifolds $W$ such that $W$ is homotopy equivalent to $\Sigma$ and contains $\Sigma$ as a topological locally flat spine, yet $W$ does not contain $\Sigma$ as a piecewise-linear spine. 
\end{mainthm}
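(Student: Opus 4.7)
The plan is to extend the proof of Corollary~\ref{cor:spineless} from 2-spheres to surfaces of arbitrary genus, substituting the higher-genus trace embedding lemma (Lemma~\ref{lem:traceglory}) for the classical one. Let $g$ denote the genus of $\Sigma$, and fix a framing $n$ that will play the role of the normal Euler number of the putative spine.

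First, I would generalize the construction underlying Theorem~\ref{thm:main}. In place of a slice knot I would start with a knot $K \subset S^3$ of smooth 4-genus exactly $g$, together with a smoothly embedded genus-$g$ surface $F \subset B^4$ bounded by $K$. Using $F$ and the framing $n$, I would assemble a model 4-manifold $W_0$ with one 0-handle, $2g$ 1-handles, and one 2-handle, so that the core of the 2-handle, capped off by a surface built from $F$, produces a smooth spine $\Sigma \hookrightarrow W_0$ with normal Euler number $n$. By construction $W_0$ is homotopy equivalent to $\Sigma$.

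Next, I would produce $W$ as an \emph{exotic} analog of $W_0$, replicating the mechanism of Theorem~\ref{thm:main} so as to replace $K$ by a knot $K'$ having the same topological 4-genus as $K$---so that a topological locally flat $\Sigma$-spine persists via Freedman theory and a homeomorphism $W \cong W_0$---yet with strictly larger smooth 4-genus. To rule out a PL $\Sigma$-spine in $W$, I would assume one exists: then, by the interchangeability of PL and smooth structures on 4-manifolds together with the spine condition, $W$ must be smoothly diffeomorphic to a smooth $n$-framed $D^2$-bundle over $\Sigma$, and applying Lemma~\ref{lem:traceglory} would then exhibit a smooth genus-$g$ surface in $B^4$ bounded by $K'$, contradicting the choice of $K'$. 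Letting $K'$ vary over an infinite family of knots distinguished by the $\tau$- or $s$-invariant yields infinitely many such $W$.

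The main obstacle will be twofold: first, coordinating the replacement of $K$ by $K'$ so that it simultaneously preserves the homeomorphism type of the ambient 4-manifold (so that the topological spine persists) while provably pushing the smooth 4-genus of $K'$ strictly above $g$; and second, verifying that a PL spine, whose embedding may be non-locally-flat, really upgrades to a smooth $D^2$-bundle structure on $W$---this is where Lemma~\ref{lem:traceglory} must be invoked with care.
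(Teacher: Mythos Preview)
Your proposal has a genuine gap at the obstruction step. You assert that a PL $\Sigma$-spine in $W$ forces $W$ to be smoothly a $D^2$-bundle over $\Sigma$; this is false. A PL embedding of a surface in a 4-manifold need not be locally flat: it can have isolated cone points whose links are arbitrary knots in $S^3$. The correct statement (used in the paper's proof of Corollary~\ref{cor:spineless} and again for Theorem~\ref{thm:spineless}) is that a PL spine $\Sigma$ has a neighborhood diffeomorphic to a genus $g$ trace $X^g_n(J)$ for \emph{some} knot $J$ determined by the cone singularities. This neighborhood need not be all of $W$, and $J$ is unrelated to the knot $K'$ you used to construct $W$. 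So even granting your construction, you would only conclude that $X^g_n(J)\hookrightarrow W$ for some unknown $J$, and your smooth-4-genus hypothesis on the \emph{specific} knot $K'$ says nothing about $J$.

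The paper addresses exactly this by obstructing \emph{every} genus $g$ trace from embedding in $W$ (Theorem~\ref{thm:detail}(c)). The mechanism is not a genus bound on one chosen knot but a combination of two global features of $W$: it is Stein with a large Chern number, and it embeds smoothly in $\#_n\cp$. If any $X^g_n(J)$ embedded carrying the generator, the trace embedding lemma would produce a smooth genus $2g$ surface representing a specific class in $W\#_n\cpbar$, and this violates the adjunction inequality coming from the Stein embedding. Your outline needs a replacement for the false ``PL spine $\Rightarrow$ disk bundle'' step, and that replacement must rule out all $J$ simultaneously; controlling the 4-genus of a single $K'$ via $\tau$ or $s$ does not do this.
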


Obstructing piecewise-linearly embedded surfaces in a 4-manifold $X$ is made difficult by the fact that a singular  point can be the cone on \emph{any} knot in $S^3$. In order to obstruct piecewise-linear embeddings, the standard trick is to show that there is some \emph{set} of knot invariants which have to take on a special set of values for any knot arising as the link of the singularity, and then show that there is no knot in $S^3$ realizing the entire set of invariants. (For a selection of applications of this strategy to various problems, see \cite{FNOP:almostconcordance} on almost concordance, \cite{HLL:homologyconcordance} on homology concordance, \cite{BL:rationalcuspidal} on rational cuspidal curves, and \cite{levine-lidman:spineless} on spinelessness.)  This clever strategy yields powerful obstructions but is often computationally demanding.

We instead use the trace embedding lemma to show that, for 4-manifolds $X$ which embed smoothly in $S^4$ (or more generally in $\#_n\cc P^2$), the existence of a PL surface in a fixed homology class in $X$ implies the existence of a \emph{smooth} surface of twice the genus in the same class in $X$ (or more generally, in a related class in $X\#_n \overline{\cc P^2})$. Therefore, to prove Theorem \ref{thm:spineless}, it suffices to construct a 4-manifold that embeds smoothly in $S^4$ and possesses a topological locally flat spine of genus $g$,  and then show there is no \emph{smooth} surface of genus $2g$ generating its second homology; we obstruct such smoothly embedded surfaces using an adjunction inequality.  We give a tidy first example of our strategy in  \S\ref{sec:example}.

A key question concerning topological spines remains open:

\begin{ques*}
Is there a compact topological 4-manifold that is  homotopy equivalent to $S^2$ yet does not contain a topologically embedded knot trace inducing the homotopy equivalence?
\end{ques*}

A topological embedding of a knot trace is equivalent to a topologically \emph{tame} embedding of a 2-sphere, i.e.~one that is locally flat away from cone points. We remark that the techniques of this paper appear ill-suited to  this question; by Liem and Venema~\cite{liemvenema}, if $W$ is a simply connected, compact, smooth 4-manifold which embeds smoothly in $S^4$, then every second homology class of $W$ is represented by a locally flat 2-sphere.

We use the examples from Theorem \ref{thm:spineless} to address the classical problem of determining when a topological embedding of a PL $m$-manifold $M$ into a PL $n$-manifold $N$ can be approximated arbitrarily closely (in the compact-open topology) by PL embeddings.  In codimension $n-m\geq 3$, Miller proved that such an approximation is always possible \cite{miller:approx}.  In codimension two,  Giffen \cite{giffen}, Eaton et al~\cite{eaton-et-al}, and Matsumoto \cite{matsumoto:wild} constructed counterexamples for even $n \geq 4$ using non-simply-connected manifolds $M$ and topologically wild embeddings $M \hookrightarrow N$, and Venema established positive results when $M$ is a surface with nonempty boundary \cite{venema}.  In codimension two, the problem long remained open for simply connected $M$ with $n \geq 4$ (and for \emph{any} $M$ when $n$ is odd or when the topological embedding $M \hookrightarrow N$ is topologically tame). We show that, in dimension four, counterexamples are abundant:

\begin{mainthm}\label{thm:approx}
In any smooth 4-manifold, every smoothly embedded, closed, orientable surface $\Sigma$ has a tubular neighborhood containing a topological locally flat embedded surface $\Sigma'$ homotopic to $\Sigma$ such that the topological embedding of $\Sigma'$ cannot be approximated by piecewise-linear embeddings.
\end{mainthm}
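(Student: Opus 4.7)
The plan is to apply Theorem~\ref{thm:spineless} directly to the topology of the tubular neighborhood. Given $\Sigma \subset X$ of genus $g$ with self-intersection $e = \Sigma \cdot \Sigma$, the tubular neighborhood $\nu(\Sigma)$ is diffeomorphic to the standard $D^2$-bundle over $\Sigma$ of Euler number $e$. I would first invoke Theorem~\ref{thm:spineless} to obtain a smooth 4-manifold $W$ homeomorphic to $\nu(\Sigma)$ that contains $\Sigma$ as a topological locally flat spine $\Sigma_W$ but does not contain $\Sigma$ as a PL spine; since disk bundles over $\Sigma$ are classified by Euler number, the construction of $W$ should have enough flexibility to match any prescribed $(g, e)$. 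Fix a homeomorphism $h \colon W \to \nu(\Sigma)$ and set $\Sigma' := h(\Sigma_W)$; this is a topological locally flat embedded surface in $\nu(\Sigma)$ homotopic to $\Sigma$, and is our candidate.

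To show $\Sigma'$ cannot be PL-approximated, suppose for contradiction that there exist PL embeddings $\phi_n \colon \Sigma \to \nu(\Sigma)$ converging to the inclusion of $\Sigma'$ in the compact-open topology. My first step would be to argue that for large $n$ each $\phi_n$ is locally flat---hence smooth in dimension four---since a cone singularity of $\phi_n$ at a point $p$ is detected by the knot type of the link of $\phi_n(p)$ in a small transverse $3$-sphere, but the locally flat embedding of $\Sigma'$ has unknotted links, and uniform convergence forces the PL links of $\phi_n$ to become unknots. By topological isotopy extension for locally flat codimension-two embeddings in 4-manifolds, $\phi_n(\Sigma)$ is then ambient isotopic to $\Sigma'$ in $\nu(\Sigma)$ via a homeomorphism $H$ close to the identity, and the composition $H \circ h \colon W \to \nu(\Sigma)$ is a pair homeomorphism $(W, \Sigma_W) \cong (\nu(\Sigma), \phi_n(\Sigma))$ in which the target surface is smoothly embedded.

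The main obstacle, and the key step of the proof, is to promote this pair homeomorphism into a contradiction with the non-existence of a PL spine in $W$. My approach would be to pull back a smooth tubular neighborhood of $\phi_n(\Sigma) \subset \nu(\Sigma)$ through $(H \circ h)^{-1}$ to obtain a topological disk-bundle neighborhood $N$ of $\Sigma_W$ in $W$, and then to argue that $N$ inherits a compatible smooth disk-bundle structure from $W$. One route is to note that $W \setminus \mathrm{int}(N)$ is a topological collar on $\partial W$ and to appeal to uniqueness results for smooth structures on such four-dimensional collars together with the classification of disk bundles over surfaces, thereby matching the smooth structure $W$ induces on $N$ with the standard one. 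Granting this, the zero section of $N$ would be a smooth---hence PL---spine of $W$, directly contradicting Theorem~\ref{thm:spineless}. I expect this smoothing step to be the most delicate portion of the argument, since it requires carefully bridging four-dimensional smoothing theory with the topological classification of pairs across the non-PL homeomorphism $h$.
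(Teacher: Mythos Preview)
Your approach has a fundamental gap, and the paper's proof avoids it by using a different ingredient. The crucial fact from the construction underlying Theorem~\ref{thm:spineless} (specifically Theorem~\ref{thm:detail}(b)) is that $W$ embeds \emph{smoothly} in the disk bundle $N = X^g_n(U) \cong \nu(\Sigma)$, not merely topologically. The paper therefore sets $\Sigma'$ to be the image of the topological spine $\Sigma_W \subset W$ under this smooth inclusion $W \hookrightarrow N$. If a PL embedding $\phi$ is sufficiently $C^0$-close to $\Sigma'$, then $\phi(\Sigma) \subset \mathring W$ and $\phi$ is homotopic to the spine through maps into $W$. Since $W$ is a smooth codimension-zero submanifold of $N$, the map $\phi$ is automatically PL in the smooth structure of $W$, contradicting Theorem~\ref{thm:spineless}. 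No smoothing arguments, isotopy extension, or analysis of links is needed.

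By contrast, your route through a homeomorphism $h:W \to \nu(\Sigma)$ runs into real obstacles. First, Step~5 fails: a PL embedding $C^0$-close to a locally flat surface need not be locally flat---one can always insert an arbitrarily small cone on a nontrivial knot, so uniform closeness gives no control over link knot types. Second, and more seriously, your Step~7 asks you to show that a topological disk-bundle neighborhood of $\Sigma_W$ in $W$ is smoothly standard, appealing to ``uniqueness results for smooth structures on four-dimensional collars.'' No such uniqueness holds in dimension four; indeed, the entire point of Theorem~\ref{thm:detail} is that $W$ is an \emph{exotic} copy of a manifold with a smooth spine, so the collar $W \setminus \mathring N$ is a topological product that is not a smooth product. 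Your final step is therefore equivalent to showing $W$ is diffeomorphic to the standard disk bundle, which is false. The fix is to replace the homeomorphism with the smooth embedding that the construction already provides.
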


The topological spines constructed in \cite{kim-ruberman} also provide examples of topologically tame (but not locally flat) embeddings of $S^2$ in certain 4-manifolds that, by \cite{levine-lidman:spineless}, cannot be approximated by PL embeddings.

Traditionally, many questions about spines of 4-manifolds are concerned with embeddings of arbitrary 2-complexes. It is natural to ask if the smooth subsurface techniques used here can obstruct more general 2-complexes from arising as spines of certain 4-manifolds, even though such 2-complexes need not even contain a closed surface as a subcomplex; this question was posed to the authors directly by Viro \cite{viro:question}. We show that the answer is \emph{yes}. For example:

\begin{mainthm}\label{thm:2-complex}
For any finite 2-complex $C$ with $H_2(C) \cong \zz$,  there exists a compact 4-manifold $X \subset S^4$ that is homotopy equivalent to $C$ yet does not contain  $C$ as a PL  spine.
\end{mainthm}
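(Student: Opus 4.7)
The plan is to adapt the strategy behind Theorem~\ref{thm:spineless} by first extracting a surface representative of $H_2(C)$ from $C$ itself. Fix a simplicial 2-cycle $z = \sum n_i \sigma_i$ generating $H_2(C;\zz) \cong \zz$. Geometrically resolve $z$ by taking $|n_i|$ oriented copies of each 2-cell $\sigma_i$ and gluing them along their shared 1-cells using a consistent matching, which exists because $\partial z = 0$. This produces a closed oriented PL surface $\Sigma_g$ of some genus $g = g(C)$ together with a natural PL map $\phi : \Sigma_g \to C$ whose image represents $z$. Both $g(C)$ and a combinatorial constant $d(C)$, bounding the pairwise sheet-identifications of $\phi$ over the 1-skeleton of $C$, depend only on $C$.

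To construct $X$, I would first strengthen Theorem~\ref{thm:spineless} quantitatively: for any genus $g$ and any integer $N \geq 2g$, there is a 4-manifold $W_{g,N} \subset S^4$ homotopy equivalent to $\Sigma_g$ whose $H_2$-generator is not representable by any smoothly embedded closed oriented surface of genus $\leq N$. The construction mirrors Theorem~\ref{thm:spineless} but uses a slice knot of sufficiently large smooth 4-genus to exclude smooth surfaces up to the desired bound $N = 2g(C) + 2d(C)$ via an adjunction inequality. Starting with $W := W_{g(C),\, 2g(C) + 2d(C)}$, I would then attach 4-dimensional 1-handles and null-homologous 2-handles inside $S^4 \setminus \mathrm{int}(W)$ along arcs and framed curves in the complement, so that the handle chain complex of the resulting $X$ matches the cellular chain complex of $C$ and its 2-handle attaching maps realize the $\pi_1$-relations of $C$. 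The resulting $X \subset S^4$ is homotopy equivalent to $C$, and $H_2(X) \cong \zz$ is generated by the class of $\Sigma_g \subset W$.

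Suppose for contradiction that $\iota : C \hookrightarrow X$ were a PL spine. The composition $\iota \circ \phi : \Sigma_g \to X$ is a PL map representing the generator of $H_2(X)$. Since $\iota$ is injective and PL-embedded 2-cells of $C$ in $X$ intersect only along their shared 1-cells, the self-intersections of $\iota \circ \phi$ coincide with the self-identifications of $\phi : \Sigma_g \to C$; a general-position perturbation within the 4-manifold $X$ converts these into at most $d(C)$ isolated transverse double points. Tubing each double point produces a PL embedded closed oriented surface in $X$ of genus $\leq g(C) + d(C)$ representing the generator of $H_2(X)$. The trace embedding lemma for higher-genus surfaces (Lemma~\ref{lem:traceglory}), applied via $X \subset S^4$, then yields a smoothly embedded closed oriented surface in $X$ of genus $\leq 2g(C) + 2d(C)$ in the same class, contradicting the design of $W$.

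The principal obstacle is the quantitative refinement of Theorem~\ref{thm:spineless}: one must verify that slice knots with arbitrarily large smooth 4-genus yield spineless 4-manifolds obstructing smooth surfaces of correspondingly large genus. This should follow from the same trace-embedding/adjunction-inequality strategy used in Theorem~\ref{thm:spineless} (for instance by connect-summing with additional copies of an appropriately chosen knot), but the bookkeeping among the knot's smooth 4-genus, the $H_2$-class of the ambient manifold, and the adjunction bound requires careful execution. A secondary subtlety is ensuring that the auxiliary 1- and 2-handles added inside $S^4 \setminus \mathrm{int}(W)$ can always be realized by PL-embedded data realizing the desired $\pi_1$-relations of $C$; this should be handled by choosing the attaching framed curves to be unknots or slice links in the complement.
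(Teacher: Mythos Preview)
Your overall strategy---extract from a hypothetical PL spine $C\hookrightarrow X$ a PL surface of bounded genus representing the $H_2$-generator, use $X\subset S^4$ and the trace embedding lemma to replace it by a smooth surface of bounded genus, then derive a contradiction from an adjunction bound---is exactly the paper's. The paper formalizes the first step as Proposition~\ref{prop:pinched} (any PL embedding of $C$ into a 4-manifold yields a PL-embedded \emph{pinched surface} of a fixed combinatorial type depending only on $(C,\alpha)$); your resolution $\phi:\Sigma_g\to C$ together with a perturbation is a plausible sketch toward that, though the behavior near the 1-skeleton needs care.

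The genuine gap is in how you set up the obstruction. You build $X$ by enlarging a spineless $W\simeq\Sigma_{g(C)}$, but your genus bound is a property of $W$: it says no smooth surface of genus $\le N$ represents the generator of $H_2(W)$. The smooth surface you eventually produce, however, lies in $X\supsetneq W$: it arises inside a higher-genus trace that is a neighborhood of your PL surface in $X$, and the trace embedding lemma places the resulting smooth surface in that trace, hence in $X$---not in $W$. There is no mechanism to isotope it back into $W$, so ``contradicting the design of $W$'' does not follow. A secondary issue is the construction of $X$ itself: matching the cellular chain complex and the $\pi_1$-relations does not guarantee $X\simeq C$, and there is no reason an arbitrary $C$ is homotopy equivalent to $\Sigma_{g(C)}$ with 1- and 2-cells attached. (Also, ``a slice knot of sufficiently large smooth 4-genus'' is a contradiction in terms; what you need is a Stein structure with large $\langle c_1,\alpha\rangle$.)

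The paper sidesteps both problems by discarding the auxiliary $W$ and building $X$ directly from the cell structure of $C$: one $i$-handle for each $i$-cell, so that $X$ deformation retracts to $C$ tautologically. The 2-handle attaching curves are then modified (by crossing changes, Legendrian stabilizations, and clasps across an adjoined Mazur cork) so that $X$ is itself Stein, embeds in $S^4$, and has $\langle c_1(X),\alpha\rangle$ as large as one likes. The adjunction inequality then constrains smooth surfaces in $X$ directly, and doubling the pinched surface supplies the contradiction.
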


Our argument applies more broadly to any 2-complex $C$ possessing a nonzero class $\alpha \in H_2(C)$ that has a finite orbit under the action on $H_2(C)$ induced by the group of self-homotopy equivalences of $C$. We expect similar techniques to hold in even greater generality, though we do not pursue that here.

Finally, we recall that the trace embedding lemma can be used to obstruct the existence of a slice disk for a knot $K$; if a non-slice knot $K'$ has $X_0(K)\cong X_0(K')$, then the trace embedding lemma implies that $K$ is not slice. It is natural to ask whether the generalizations of the trace embedding lemma can be used similarly to obstruct the existence of higher genus slice surfaces. For the sake of completeness in the literature, we write down a proof that the immediate answer is \emph{no}: If $g>0$, then any two knots $K,K' \subset S^3$ with diffeomorphic \emph{genus g traces} (as defined in Definition \ref{def:hgh}) must be isotopic; see Proposition~\ref{prop:fail}.

\smallskip

\emph{Organization.} We begin in \S\ref{sec:example} by proving a special case of Theorem~\ref{thm:main} as well as Corollaries \ref{cor:spineless}-\ref{cor:lag}. This short section is intended to provide a simple example illustrating the core ideas and constructions to the casual reader. The formal reader can skip ahead to the later sections where we work in more generality. In \S\ref{sec:definitons}, we define higher genus knot traces, and give the necessary background on trace embeddings and Stein structures. In \S\ref{sec:fake-traces}, we prove Theorem~\ref{thm:detail} (which supercedes Theorems \ref{thm:main} and \ref{thm:spineless}) and Theorem~\ref{thm:approx}. Finally, in  \S\ref{sec:2comlex} we shift our attention to general 2-complexes and prove Theorem \ref{thm:2-complex}.

\smallskip

\emph{Conventions.} All manifolds are assumed to be smooth unless stated otherwise. A map between smooth manifolds $M$ and $N$ is called  \emph{piecewise linear} if there exist triangulations of $M$ and $N$ (compatible with their smooth structures) with respect to which the map is piecewise linear. We say that a map $f: M \to N$ is a \emph{topological embedding} if it is a homeomorphism onto its image. Such a map is called  \emph{tame} if it is topologically locally  equivalent to a  piecewise-linear map (i.e.~up to local homeomorphism), and otherwise it is called \emph{wild}. The space of all continuous maps between $M$ and $N$ is denoted $C^0(M,N)$ and is always equipped with the compact-open topology. We say that a map $f: M \to N$ can be \emph{approximated} by maps in a subset $A \subset C^0(M,N)$ if $f$ lies in the closure of $A$.

\smallskip

\emph{Acknowledgements.} We thank Adam Levine and Tye Lidman for conversations that helped inspire this paper, and Maggie Miller for her close reading and helpful comments on a draft. K.H.~was supported in part by NSF grant DMS-1803584, and L.P. was supported in part by NSF grant DMS-1902735 and in part by funding from the Simons Foundation and the Centre de Recherches Math\'ematiques, through the Simons-CRM scholar-in-residence program.

\section{An illustrative example and proofs of corollaries}\label{sec:example}

To illustrate the core ideas of our construction, highlight the role of the classical trace embedding lemma, and provide a simple counterexample to Problem 4.25 of \cite{kirby}, we begin with a single example: Let $W$  be the 4-manifold from Figure~\ref{fig:Z1stein}. We will show that: \begin{enumerate}
\item $W$ embeds smoothly in $S^4$,
\item $W$ admits a Stein structure, and
\item $W$ is homeomorphic to $W'=X_0(K)$, where $K$ is a slice knot in $S^3$.\end{enumerate}

\begin{figure}\center
\def\svgwidth{.55\linewidth}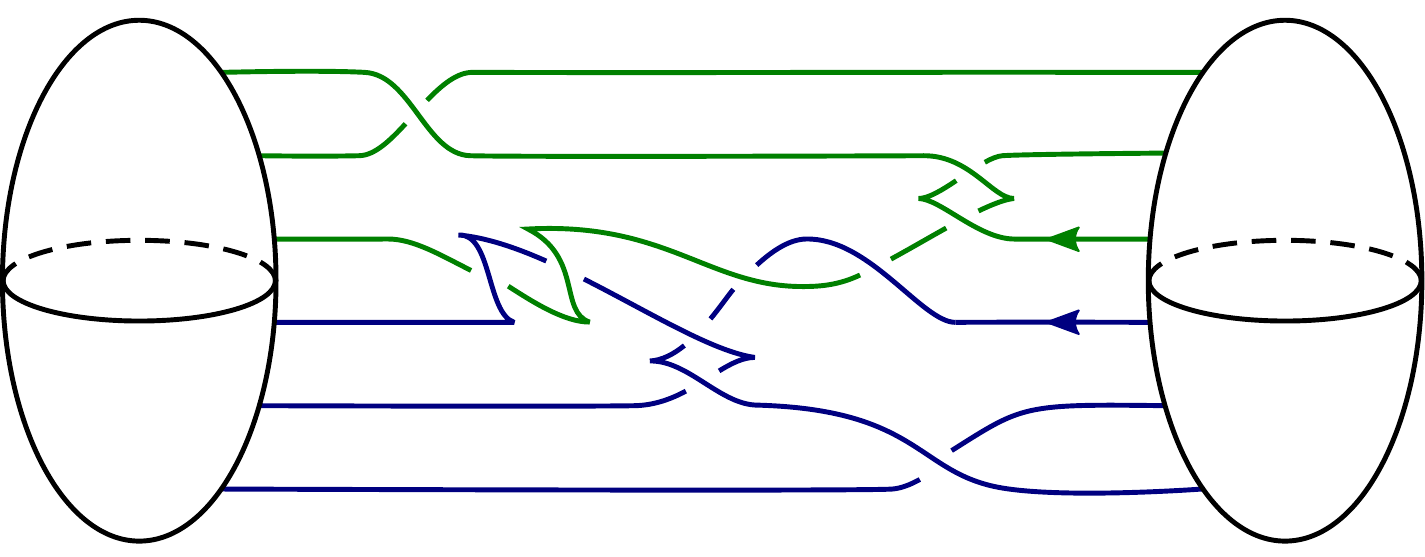
\caption{A Stein handlebody diagram for $W$.}
\label{fig:Z1stein}
\end{figure}

Assuming these for a moment, we demonstrate how the trace embedding lemma shows $W$ is not diffeomorphic to any knot trace. For the sake of contradiction, let us suppose that there is some knot $J \subset S^3$ for which there exists a smooth embedding $X_n(J) \hookrightarrow W$ inducing an isomorphism on second homology. (Note that a diffeomorphism between $X_n(J)$ and $W$ is an example of such an embedding.) Since the intersection form of $W$ is $\langle 0\rangle$, we must have $n=0$. Since $W$ embeds smoothly in $S^4$, the knot trace $X_0(J)$ embeds smoothly in $S^4$, hence the trace embedding lemma implies that $J$ is slice. Thus there is a smooth 2-sphere embedded in $X_0(J)$ generating second homology, which in turn implies that there is a smooth 2-sphere embedded in $W$ generating second homology. However, this violates  the adjunction inequality for Stein 4-manifolds \cite{lisca-matic} as follows: if $\Sigma$ is a smoothly embedded surface in $W$ representing a nontrivial class in $H_2(W)$, then 
\begin{equation}\label{eq:adjunction}
[\Sigma] \cdot [\Sigma] + \left| \langle c_1(W),[\Sigma] \rangle \right| \leq 2g(\Sigma)-2.
\end{equation}
In particular, the smoothly embedded 2-sphere representing a generator of $H_2(W)$  violates \eqref{eq:adjunction}. We conclude that $W$ cannot contain an embedded knot trace generating its second homology, and hence is not diffeomorphic to a knot trace.

\textbf{Proof of (1)}: Consider the diagram of $W$ in dotted circle notation on the left side of Figure~\ref{fig:ExPair}, and let $\mu$ denote a meridian of the dotted circle. We may attach a 2-handle to $W$ along the knot corresponding to the zero-framed curve $\mu$, which can then be canceled with the 1-handle. After erasing the dotted circle and simplifying, we obtain a 0-framed unlink with two components. Attaching two 3-handles and a 4-handle yields $S^4$, exhibiting the desired embedding of $W$ in $S^4$.\hfill $\square$

\textbf{Proof of (2)}: The attaching curves for the 2-handles in Figure~\ref{fig:Z1stein} are drawn as Legendrian knots in the contact boundary of the Stein domain $S^1 \times B^3$. Following~\cite{gompf:stein} (as discussed in Remark~\ref{rem:framings}), we compute the Thurston-Bennequin numbers of the attaching curves to be $tb(G)=1$ and $tb(B)=1$.  Since the 2-handles attached along $G$ and $B$ have framing $tb-1$, it follows that $W$ admits a Stein structure. \hfill $\square$

\begin{figure}\center
\def\svgwidth{.8\linewidth}
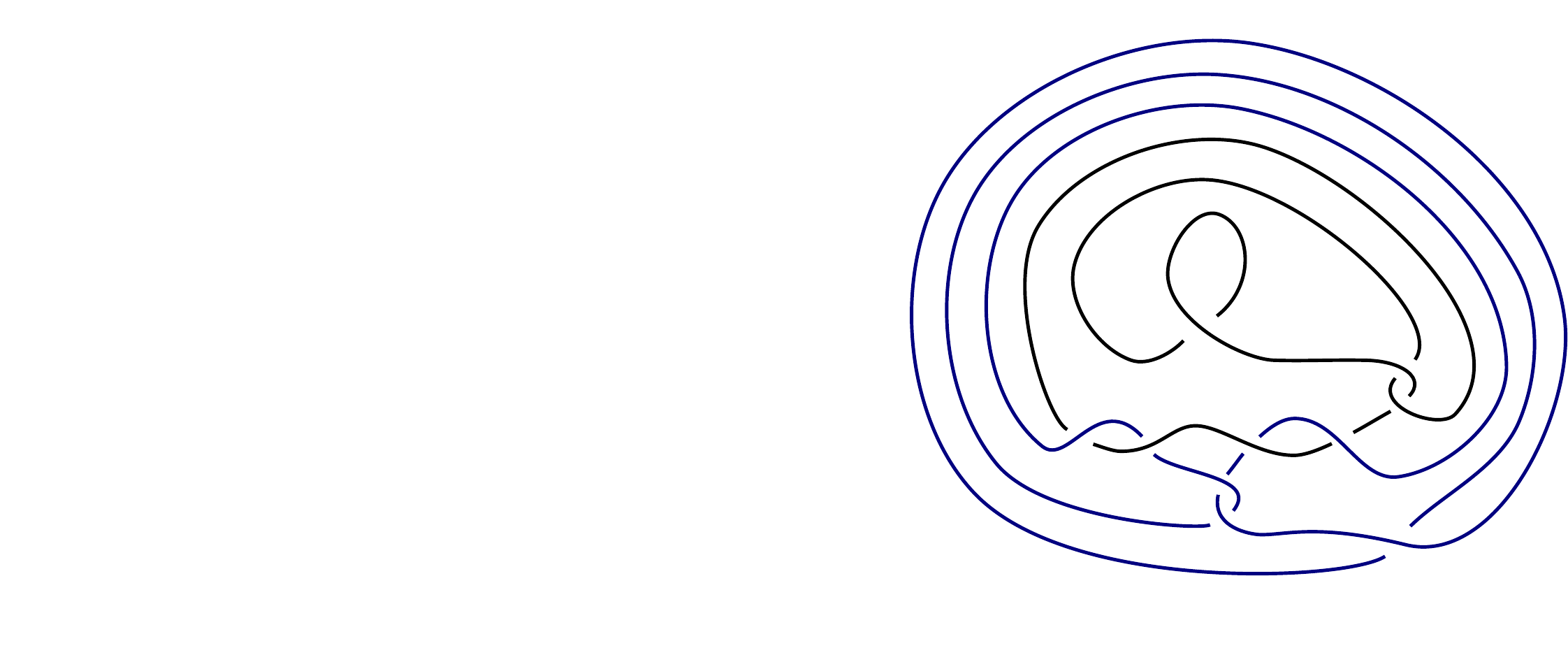
\caption{Diagrams for $W$ and $W'$ related by a dot-zero exchange.}
\label{fig:ExPair}
\end{figure}

\textbf{Proof of (3)}: Let $M$ denote the well-known Mazur cork, defined in Figure \ref{fig:Mazur}, and observe that $W$ contains an embedded copy of $M$ arising from the 0-handle, 1-handle, and the green 2-handle $G$ in Figure~\ref{fig:Z1stein}. That is, $W$ is obtained from $M$ by attaching a 2-handle along a framed knot in $\partial M$ corresponding to the 0-framed curve $B$ in Figure~\ref{fig:Z1stein}. The boundary of $M$ is known to admit an involution $\tau: \partial M \to \partial M$ that extends to a homeomorphism of $M$ (but not a diffeomorphism); see \cite{akbulut:cork,freedman}. The 4-manifold $W'$ obtained by removing $M$ from $W$ and regluing it by $\tau$ is thus homeomorphic to $W$. Diagrammatically, this operation corresponds to a \emph{dot-zero exchange}; see \cite{akbulut:book}. In particular, we first redraw Figure~\ref{fig:Z1stein} in dotted circle notation as on the left side of Figure~\ref{fig:ExPair}. We then obtain a diagram for $W'$ by reversing the roles of the dotted circle and the 0-framed 2-handle attached along $G$ as on the right side of Figure~\ref{fig:ExPair}.

Next we verify that $W'$ is a zero-trace. After isotopy, we obtain the diagram on the left side of Figure~\ref{fig:Zsmooth}.  After performing the indicated handleslide of the green curve $G'$ over $B'$, we obtain a new curve $G''$ as on the right side of Figure~\ref{fig:Zsmooth}. Observe that $G''$ may be isotoped to run over the 1-handle exactly once geometrically. This isotopy produces a pair of intersections between the curve $B'$ and the belt sphere of the 1-handle; these can be eliminated by sliding $B'$ twice over $G''$.  We may then cancel the 1-handle with the 2-handle attached along $G''$, leaving a handle decomposition of $W'$ with exactly one 0-handle and one 0-framed 2-handle. It follows that $W'$ is the zero-trace $X_0(K)$ of some knot $K$ in $S^3$. 

\begin{figure}[b]\center
\def\svgwidth{.85\linewidth}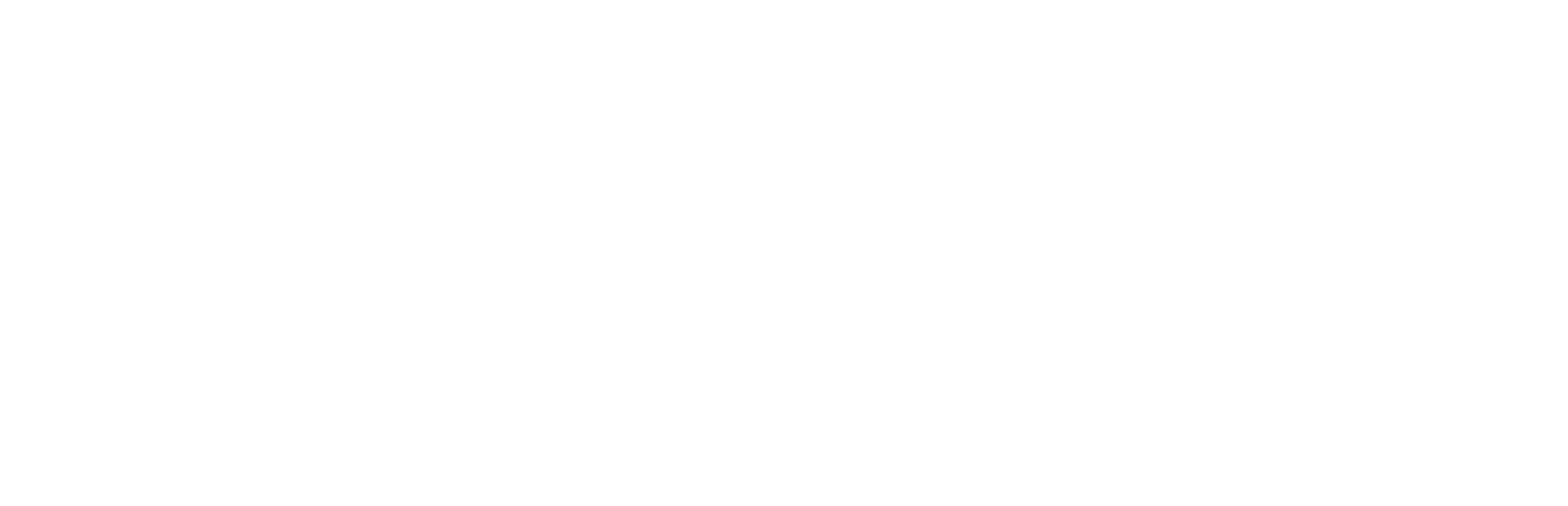
\caption{Simplifying the handle diagram for $W'$.}
\label{fig:Zsmooth}
\end{figure}

It remains to show that $K$ is slice. This can be seen by performing the handle calculus discussed in the previous paragraph, after which it is evident that the knot $K$ admits a ribbon disk with two local minima; we pursue this strategy in the proof of Theorem \ref{thm:detail}. For variety, here we will prove that $K$ is slice by embedding $W'=X_0(K)$ in $S^4$ and applying the trace embedding lemma.

Start with the handle diagram of $W'$ in Figure~\ref{fig:ExPair} and add a 0-framed 2-handle to a meridian $\mu'$ of the green 2-handle $G'$. The blue 2-handle $B'$ can then be slid over the new 2-handle until it is unlinked from $G'$, and we may further isotope it away from the dotted circle to realize $B'$ as a split 0-framed unknot.  We add a 3-handle to cancel this 2-handle. Next, since $G'$ is \emph{homotopic} to a meridian of the dotted circle, we may slide $G'$ over its own 0-framed meridian $\mu'$ to change crossings until $G'$ is \emph{isotopic} to a meridian of the dotted circle.  We may then cancel the 1-handle and the green 2-handle. All that remains is the 0-framed unknot $\mu'$; this may be canceled by adding a final 3-handle. This exhibits an embedding of $W'=X_0(K)$ in $B^4 \subset S^4$, so the trace embedding lemma implies that $K$ is slice. \hfill $\square$

We now turn our attention to the  corollaries of Theorem~\ref{thm:main}, whose proofs require the stronger statement that no embedding of a knot trace into $W$ can induce an isomorphism  on second homology; see Theorem \ref{thm:detail}. 

\begin{proof}[Proof of Corollary \ref{cor:spineless}]
Let $W$ be as in Theorem~\ref{thm:main}. Then $W$ is homeomorphic to the $n$-trace $W'=X_n(K)$ of a slice knot $K$ in $S^3$. Observe that $X_n(K)$ contains a 2-sphere as a \emph{smooth} spine, obtained as the union of the core of the 2-handle and the slice disk for $K$. Therefore $W$ is also homotopy equivalent to $S^2$ and contains a topological locally flat spine given by the image of the smooth spine of $W'=X_n(K)$  under the homeomorphism.

For the sake of contradiction, suppose that $W$ contains a 2-sphere $\Sigma$ as a PL spine.  We claim that $\Sigma \subset W$ has a neighborhood diffeomorphic to the $n$-trace of a knot in $S^3$. To see this, first note that $\Sigma$ may be assumed to be smooth away from  finitely many singular points $p_i$ near which $\Sigma$ is the cone on a knot $K_i \subset S^3$. If we take a path in $\Sigma$ joining the points $p_i$, then a  small tubular neighborhood $V \cong B^4$ of the path meets $\Sigma$ along a singular disk $\Delta$  bounded by the knot $K'=\#_i K_i$ in $\partial V \cong S^3$. Then $\Sigma \smallsetminus \mathring{\Delta}$ is a smooth disk meeting $V$ along $K' \subset\partial V$, and the union of $V$ and a tubular neighborhood of $\Sigma \smallsetminus \mathring{\Delta}$ is a trace of $K'$ with intersection form $\langle n\rangle$ for $n=[\Sigma]\cdot [\Sigma]$, hence is diffeomorphic to $X_n(K')$. Now, since $[\Sigma]$ generates $H_2(W)$, the inclusion $X_n(K') \hookrightarrow W$ induces an isomorphism on second homology. This contradicts Theorem~\ref{thm:main} (as reformulated in Theorem~\ref{thm:detail}).
\end{proof}

\begin{proof}[Proof of Corollary \ref{cor:gsc}] It is well-known that every second homology class of a 4-manifold built without 1-handles can be represented by a piecewise-linear 2-sphere. Indeed, by cellular approximation and transversality, any second homology class is represented by a smoothly embedded surface formed as a union of $n$ disks in the 2-handles (parallel to their cores) and a properly embedded surface in the 0-handle. We may instead join those $n$ disks by $(n-1)$ bands in the boundary of the 0-handle to form a single disk, then cone off its boundary in the 0-handle. The result is a piecewise-linear 2-sphere in the same homology class.

In particular, any geometrically simply connected 4-manifold that is homotopy equivalent to $S^2$ must contain a PL spine. The claim now follows from Theorem~\ref{thm:main} and Corollary~\ref{cor:spineless}.
\end{proof}

\begin{proof}[Proof of Corollary~\ref{cor:lag}] Let $W$  be the 4-manifold from Figure \ref{fig:Z1stein}. By claims (1) and (2) above,  $W$ embeds smoothly in $S^4$  (hence in $B^4$)  and admits a Stein structure.  By work of Gompf \cite{gompf:image}, $W$ can be realized as a Stein subdomain of $\cc^2$ if the almost-complex structure on $W$ induced by the embedding $\iota:W\hookrightarrow B^4 \subset \cc^2$ is homotopic to the almost-complex structure underlying the Stein structure on $W$. Further, again as in \cite{gompf:image}, since $H^2(W,\mathbb{Z})$ has no 2-torsion, this homotopy class is preserved if and only if the orientation and Chern class $c_1(W)$ are preserved. It is straightforward to see that embedding preserves the orientation of $W$, so it remains to compare the Chern classes.  We first consider the almost-complex structure induced by the Stein structure. The evaluation of the Chern class $c_1(W)$ on a generator $\alpha \in H_2(W)$ corresponds to the difference in the rotation numbers of the oriented attaching curves $B$ and $G$ in Figure~\ref{fig:Z1stein}; see \cite{gompf:stein}. These rotation numbers are both 1 (as per Remark~\ref{rem:framings}), so $\langle c_1(W),\alpha \rangle =0$ and thus $c_1(W)=0$. Next observe that the almost-complex structure induced by the embedding $\iota: W\hookrightarrow \cc^2$ has Chern class given by $\iota^*(c_1(\cc^2))$, which vanishes because $c_1(\cc^2)=0$.  
Thus the Chern class induced by the embedding agrees with the Chern class associated to the Stein structure on $W$.  
Applying \cite[Theorem~2.1]{gompf:image}, we conclude that the image of $W$ in $\cc^2$ is isotopic to a Stein subdomain of $\cc^2$.

\begin{figure}
\center
\def\svgwidth{\linewidth}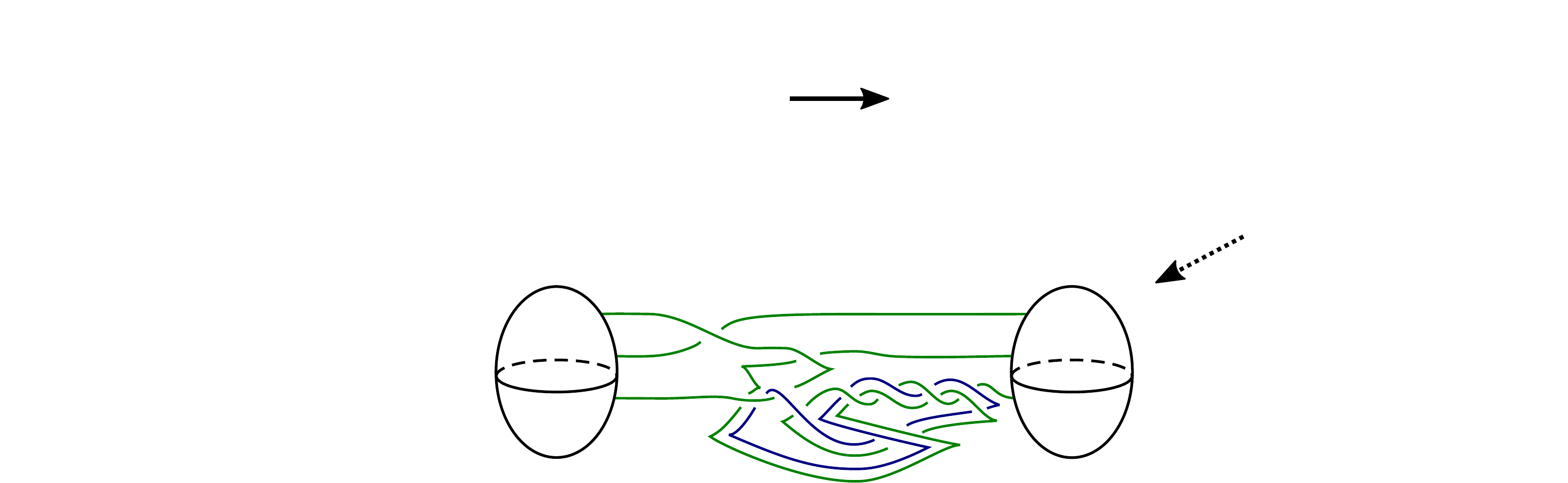 
\caption{Modifying a handle diagram for $W'$   obtain $W''$.}
\label{fig:lag}
\end{figure}

Next we construct a second Stein filling of $\partial W$, obtained as the exterior of a properly embedded Lagrangian disk in $B^4 \subset \cc^2$.  Consider the handle diagram of the 4-manifold $W'$ shown on the left side of Figure~\ref{fig:lag}, which is obtained by simplifying and redrawing the diagram of $W'$ from Figure~\ref{fig:ExPair} in standard 1-handle notation. After performing the indicated handleslide, we obtain the diagram on the right. The attaching curves $G''$ and $B'$ can be viewed as Legendrian knots with $tb(G'')=3$ and $tb(B')=-1$ in the contact boundary of the Stein domain $S^1 \times B^3$. Note that, since $B'$ is a standard Legendrian unknot in the boundary of $S^1 \times B^3$, it bounds a standard Lagrangian disk $\Sigma$ in $S^1 \times B^3$ with Stein exterior. 

Attaching the 2-framed 2-handle to $S^1 \times B^3$ along $G''$ yields a Stein domain diffeomorphic to $B^4$, which must be standard by \cite{yasha:filling}. Since this 2-handle is attached away from $\Sigma \subset S^1 \times B^3$, the Lagrangian disk $\Sigma \subset S^1 \times B^3$ gives rise to a Lagrangian disk in $B^4$ whose boundary is a Legendrian representative of the slice knot in $S^3$ induced by $B'$. Carving out this Lagrangian disk as indicated in the third diagram of Figure~\ref{fig:lag} yields a Stein domain $W''$ with the same boundary as $W'$ and $W$. Since $W''$ is the exterior of a smooth disk in $B^4$, it embeds smoothly in $\cc^2$. And since $W''$ has the homology type of $S^1 \times B^3$, the embedding $W'' \hookrightarrow \cc^2$ preserves the homotopy class of almost-complex structure on $W''$ and thus $W$ is isotopic to a Stein subdomain of $\cc^2$ by \cite{gompf:image}.

It is straightforward to modify this construction to produce an infinite family of such examples. For example, one may modify the curve $B$ in Figure~\ref{fig:Z1stein} by taking a (local)  connected sum with any nontrivial Legendrian knot that bounds a regular Lagrangian slice disk in $(B^4,\omega_\st)$ \cite{conway-etnyre-tosun,cornwell-ng-sivek:obstructions}. We leave it to the reader to check that the arguments above extend to this family of 4-manifolds.
\end{proof}

\section{Higher genus traces and the  embedding lemma}\label{sec:definitons}\label{subsec:embedding} 

In this section, we define higher genus traces, state and prove the general trace embedding lemma, and collect the necessary background about Stein structures on 4-manifolds. The key definition is the following generalization of a 4-dimensional 2-handle attachment:

\begin{defn}\label{def:hgh}
For any integer $g \geq 0$, a \emph{genus $g$ handle} is a copy of $F \times D^2$, where $F$ is a compact genus $g$ surface with one boundary component, attached to the (outward-normally) oriented boundary of an oriented 4-manifold $X$  by an embedding $\varphi: \partial F \times D^2 \to \partial X$. \end{defn}

As with traditional handle attachments,  there is a canonical way to smooth the corners of $X \cup_\varphi F \times D^2$, and the diffeomorphism type of the resulting smooth 4-manifold is determined by two pieces of data: 
\begin{enumerate}
\item  the knot $K \subset \partial X$ along which $\partial F \times 0 \subset \partial F \times D^2$ is attached, and 
\item a framing of a trivial tubular neighborhood $\nu(K) \subset \partial X$ used to identify $\nu(K)$ with $\partial F \times D^2$ under the handle attachment.
\end{enumerate}

Given a knot $K \subset S^3$, we define its \emph{$n$-framed, genus $g$ trace} $X^g_n(K)$ to be the oriented 4-manifold obtained by attaching an $n$-framed, genus $g$ handle to the oriented $B^4$ along $K$.

\begin{rem}
For a diagrammatic description of higher genus handle attachments, see \S\ref{subsec:diagrams}.
\end{rem}

\subsection{The trace embedding lemma}

The trace embedding lemma proven below is a straightforward generalization of the classical result for knot traces; its proof is well-known to experts. Given a fixed $n$-framed, genus $g$ trace $X^g_n(K)$, we let $\alpha$ denote a generator of second homology. For the following statement, let $W$ be any smooth 4-manifold and let $\beta \in H_2(W)$ be any chosen second homology class.

\begin{lem}[The trace embedding lemma]\label{lem:traceglory}
There exists a smooth embedding $f:X^g_n(K)\to W$ with $f_{*}(\alpha)=\beta \in H_2(W)$ if and only if the mirror $-K$ bounds a smooth genus $g$ surface $\Sigma$ in $W\smallsetminus \mathring{B^4}$ with $[\Sigma]=\beta\in H_2(W \smallsetminus \mathring{B^4}, S^3)\cong H_2(W)$ such that $\beta \cdot \beta=n$.
\end{lem}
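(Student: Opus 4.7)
The plan is to prove both implications by passing between the genus-$g$ handle and the core surface of its image/preimage, with the main bookkeeping being the natural isomorphism
\[ H_2(W \smallsetminus \mathring{B^4}, S^3) \cong H_2(W) \]
coming from the long exact sequence of the pair $(W, B^4)$ and excision, using that $B^4$ is contractible. Under this isomorphism, a class represented by a properly embedded surface $\Sigma$ corresponds to the class of $\Sigma$ capped off by any surface in $B^4$ with the correct boundary.

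For the forward direction, given a smooth embedding $f: X^g_n(K) \to W$ with $f_*(\alpha) = \beta$, I would set $B := f(B^4)$ and let $\Sigma := f(F \times \{0\})$ be the image of the core of the genus-$g$ handle. Since $F \times \{0\}$ meets $B^4 \subset X^g_n(K)$ exactly along $\partial F \times \{0\} = K$ in $\partial B^4$, the surface $\Sigma$ is properly embedded in $W \smallsetminus \mathring{B}$ with $\partial \Sigma = f(K)$. After identifying $\partial B = S^3$ via $f$, the induced orientation on $\partial \Sigma$ coming from $\Sigma \subset W \smallsetminus \mathring{B}$ presents the boundary knot as the mirror $-K$, since $\partial(W \smallsetminus \mathring{B})$ carries the $S^3$-orientation opposite to that of $\partial B$. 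The isomorphism above identifies $[\Sigma]$ with $f_*(\alpha) = \beta$, and the self-intersection $[\Sigma] \cdot [\Sigma]$ equals the framing integer $n$ of the handle attachment.

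For the reverse direction, start with a properly embedded genus $g$ surface $\Sigma \subset W \smallsetminus \mathring{B^4}$ realizing the conditions. After a small isotopy $\Sigma$ can be assumed to meet $S^3 = \partial B^4$ orthogonally along a collar of $\partial \Sigma = -K$. Because $\Sigma$ and $W$ are both orientable, the normal disk bundle of $\Sigma$ in $W \smallsetminus \mathring{B^4}$ is trivial, so a closed tubular neighborhood is diffeomorphic to $\Sigma \times D^2$, and its intersection with $S^3$ is a solid-torus tubular neighborhood $\partial \Sigma \times D^2$ of $K$. Then $B^4 \cup (\Sigma \times D^2) \hookrightarrow W$ realizes a genus-$g$ handle attachment along $K$, and the attaching framing equals the Euler number of the normal bundle of $\Sigma$ relative to the Seifert framing of $\partial \Sigma$ in $S^3$, which is exactly $[\Sigma] \cdot [\Sigma] = n$. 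The resulting embedding sends the generator $\alpha$ of $H_2(X^g_n(K))$ to $\beta$ by naturality of the isomorphism above.

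The only points that need care are the orientation/mirror bookkeeping between $\partial B^4$ and $\partial(W \smallsetminus \mathring{B^4})$, and the identification of the attaching framing with the self-intersection $[\Sigma] \cdot [\Sigma]$; the latter is the straightforward generalization of the classical statement that for a Seifert surface pushed into $B^4$, the framing of the boundary computed from a normal pushoff agrees with the Seifert framing. Neither step is serious, and the argument is essentially a routine extension of the classical $g = 0$ case.
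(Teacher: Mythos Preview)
Your proposal is correct and follows essentially the same argument as the paper: in both directions the trace is realized as $B^4 \cup \nu(\Sigma)$ for the core surface $\Sigma$, with the mirror arising from the orientation reversal between $\partial B^4$ and $\partial(W\smallsetminus\mathring{B^4})$, and the framing identified with $\beta\cdot\beta$ via a Seifert capping. The only cosmetic difference is that in the forward direction the paper removes a small ball around the cone point of the PL sphere (cone on $K$ $\cup$ core), whereas you remove $f(B^4)$ directly; these yield the same $\Sigma$.
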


\begin{rem}
We assume that the boundary of an oriented 4-manifold is equipped with the orientation induced by the outward normal direction. The consideration of orientations can be important; for example, when $S^3$ is considered as the boundary of $\cp$, the right-handed trefoil bounds a nullhomologous disk and the left-handed trefoil does not. 
\end{rem}

\begin{proof}[Proof of Lemma~\ref{lem:traceglory}]
We begin with the ``if" direction: Since $\Sigma$ is smooth and has nonempty boundary, it has a trivial tubular neighborhood $\nu(\Sigma)\cong \Sigma\times D^2$ in $W\smallsetminus \mathring{B^4}$. Now consider $W$, obtained from $W\smallsetminus \mathring{B^4}$ by gluing on a standard 4-ball. Observe that the union of this 4-ball and $\nu(\Sigma)$ forms a genus $g$ trace $X$ for some knot in $S^3$. The genus $g$ handle is clearly attached along $-K$ when $S^3$ is viewed as the oriented boundary of $W\smallsetminus \mathring{B^4}$. However, traces are defined in terms of the attaching map to $S^3$ with its orientation as the boundary of the 4-ball, so $X$ is in fact the trace of the knot $K \subset \partial B^4$.

It remains to verify the framing on the trace $X$ and the claim about the induced map on second homology. To that end, let $\Sigma'$ be any closed surface obtained by capping off $\Sigma$ with a Seifert surface for $-K$ in the boundary of $W \smallsetminus \mathring{B^4}$. The surface $\Sigma'$ represents $\beta$ under the isomorphism $H_2(W) \cong H_2(W \smallsetminus \mathring{B^4},S^3)$. It also represents a generator $\alpha \in H_2(X)$, and thus the inclusion of $X$ into $W$ maps $\alpha$ to $\beta$, as claimed. Finally, since $[\Sigma']\cdot [\Sigma']= \beta \cdot \beta = n$, we have $\alpha \cdot \alpha = n$ and thus $X=X^g_n(K)$.

For the ``only if" direction: Let us suppose that $X_n^g(K)$ embeds smoothly in $W$, and let $\Sigma' \subset X_n^g(K) \subset W$ be a piecewise-linear representative of $\alpha \in H_2(X^g_n(K))$ formed from the union of the cone on the knot $K$ and the core surface of the genus $g$ handle. A sufficiently small neighborhood $V \cong B^4$ of the cone point meets $\Sigma'$ in a singular disk $\Delta$ with $\partial \Delta=K$ in $\partial V \cong S^3$. Then $\Sigma=\Sigma' \smallsetminus \mathring{\Delta}$ is a smooth, genus $g$ surface in $W \smallsetminus \mathring{V} \cong W \smallsetminus \mathring{B^4}$. Since $\partial \Sigma$ is $K$ when viewed in the boundary of $V \cong B^4$, we see that $\partial \Sigma$ is $-K$ when viewed in the boundary of $W \smallsetminus \mathring{V} \cong W \smallsetminus \mathring{B^4}$. Using the isomorphism $H_2(W \smallsetminus \mathring{B^4},S^3)\cong H_2(W)$, $\Sigma \subset W \smallsetminus \mathring{B^4}$ represents  the image $\beta \in H_2(W)$ of the generator $\alpha \in H_2(X^g_n(K))$ under the map induced by the inclusion of $X^g_n(K)$ into $W$.
\end{proof}

 As mentioned in the introduction, the trace embedding lemma can provide a powerful tool for determining whether or not a given knot $K'$ is slice (e.g.~\cite{picc:conway}). In particular, if two knots $K$ and $K'$ in $S^3$ have diffeomorphic zero-traces, then $K$ is slice if and only if $K'$ is slice. Since the trace embedding lemma holds for higher genus traces, it has been asked if the strategy from \cite{picc:conway} can be used to obstruct the existence of higher genus slice surfaces. Unfortunately, the naive extension of this strategy is doomed, as one cannot even find a pair of inequivalent knots in $S^3$ whose higher genus traces have the same boundary:

\begin{prop}\label{prop:fail}
If $\partial X^g_n(K) \cong \partial X^g_n(K')$ for some integer $g >0$, then there is a homeomorphism of pairs taking $(S^3, K)$ to $(S^3, K')$.
\end{prop}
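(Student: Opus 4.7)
The plan is to recover the knot exterior $E_K = S^3 \smallsetminus \mathring{\nu(K)}$ intrinsically from the 3-manifold $M := \partial X^g_n(K)$ and then invoke the Gordon--Luecke theorem. I would begin from the explicit description $M = E_K \cup_T (F \times S^1)$, where $F$ is the compact genus $g$ surface with one boundary circle, $T = \partial E_K = \partial F \times S^1$ is the gluing torus, and the identification sends $\partial F \times \{\text{pt}\}$ to the $n$-framed longitude $\lambda + n\mu$ and $\{\text{pt}\} \times S^1$ to the meridian $\mu$.

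The first step is to verify that $T$ is incompressible in $M$ whenever $K$ is non-trivial. Incompressibility in $E_K$ is classical, and incompressibility in $F \times S^1$ follows from the injectivity of $\pi_1(T) \hookrightarrow \pi_1(F) \times \mathbb{Z}$, which uses only that $\partial F$ represents the non-trivial commutator word $[a_1,b_1]\cdots[a_g,b_g]$ of infinite order in the free group $\pi_1(F)$ when $g \geq 1$. The case where either $K$ or $K'$ is the unknot I would handle separately by a direct van Kampen computation: the gluing relations collapse $\pi_1(\partial X^g_n(U))$ to the free group $F_{2g}$; if the other knot were non-trivial, incompressibility of its boundary torus would embed its (necessarily non-free, by deficiency) knot group into $F_{2g}$, a contradiction.

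Assuming both knots are non-trivial, the core of the argument is to single out $F \times S^1$ as a distinguished piece of the JSJ decomposition of $M$. The Seifert fiber of $F \times S^1$ on $T$ is $\mu$, and this cannot match the Seifert fiber slope on $T$ of any Seifert-fibered JSJ piece of $E_K$ meeting $T$ --- otherwise $\mu$ would be a regular fiber of a Seifert fibration of $E_K$, forcing $K$ to be the unknot --- so the two Seifert structures do not amalgamate across $T$ and $T$ persists as a JSJ torus. Moreover, every Seifert-fibered JSJ piece of a knot exterior in $S^3$ has base orbifold of genus zero, whereas $F \times S^1$ has base of genus $g \geq 1$; hence $F \times S^1$ is the unique JSJ piece of $M$ with positive-genus base. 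By the canonicity of the JSJ decomposition, any homeomorphism $M \cong M'$ can be isotoped to interchange the $F \times S^1$ pieces, which restricts to a homeomorphism $E_K \cong E_{K'}$. Gordon and Luecke's theorem then yields the desired homeomorphism of pairs $(S^3, K) \cong (S^3, K')$.

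The hard part will be justifying the structural claim that every Seifert-fibered JSJ piece of a knot exterior in $S^3$ has genus-zero base orbifold; this follows from the classification of JSJ pieces of satellite knots in $S^3$ (torus knot exteriors, cable spaces, and composite connectors) via Alexander's theorem, and I would want to cite it carefully. As a fallback, one can distinguish $F \times S^1$ from those standard pieces using group-theoretic invariants, such as the rank of $\pi_1$ modulo its center.
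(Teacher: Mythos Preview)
Your overall strategy---recover $E_K$ from the JSJ structure of $\partial X^g_n(K)$ and then invoke Gordon--Luecke---is the same as the paper's, but two of your steps are incorrect as written.

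First, the unknot case: the claim that $\pi_1(\partial X^g_n(U))\cong F_{2g}$ is false. Gluing the solid torus $E_U$ to $F\times S^1$ kills the longitude $\lambda$ (which bounds the Seifert disk in $E_U$), not the meridian $\mu$; since $\{\text{pt}\}\times S^1$ is identified with $\mu$, the circle factor survives. Concretely, $\partial X^g_n(U)$ is the Euler-number-$n$ circle bundle over the closed surface $\Sigma_g$, and its fundamental group is a central extension of $\pi_1(\Sigma_g)$ by $\mathbb{Z}$, not a free group. The paper handles the unknot separately by comparing the collections of incompressible tori in $\partial X^g_n(U)$ and $\partial X^g_n(K)$.

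Second, and more substantively, the assertion that $T$ is always a JSJ torus of $M$ fails for composite knots. When $K=K_1\#\cdots\#K_m$, the JSJ piece of $E_K$ meeting $\partial E_K$ is the composing space $P\times S^1$ (with $P$ an $m$-holed disk), and its regular fiber on $\partial E_K$ \emph{is} the meridian $\mu$. Thus the Seifert structures on $P\times S^1$ and $F\times S^1$ match along $T$, the two pieces amalgamate, and $T$ is absorbed into a single larger Seifert piece. Your justification (``otherwise $\mu$ would be a regular fiber of a Seifert fibration of $E_K$, forcing $K$ to be the unknot'') conflates the JSJ piece adjacent to $T$ with all of $E_K$. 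The paper sidesteps this issue by never requiring $T$ to be a JSJ torus: it works inside the characteristic Seifert submanifold $M\supset F\times S^1$, shows that $\partial(F\times S^1)$ is a \emph{vertical} torus in $M$, observes that the base of $M$ is $F$ glued to a planar surface (since $M\smallsetminus(F\times S^1)$ is a Seifert submanifold of $S^3$), and then argues that any embedded copy of $F\times S^1$ must project to the unique genus-$g$ one-holed subsurface of that base.
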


\begin{proof}
The key idea is that $\partial X^g_n(K)$ splits into two pieces: a knot complement in $S^3$ and the 3-manifold $F \times S^1$, where $F$ is a compact surface with genus $g>0$ and one boundary component.  Since a knot is determined by its complement~\cite{gordon-luecke}, it suffices to show that such a decomposition is unique.

The JSJ decomposition theorem states that $\partial X^g_n(K)$ contains a unique maximal  Seifert fibered submanifold $M$ (the \emph{characteristic submanifold}) whose complement is atoroidal \cite{jaco-shalen,johannson}. Let $N \subset \partial X^g_n(K)$ denote the copy of $F \times S^1$ that replaced the solid torus neighborhood of $K \subset S^3$. Note that $N$ clearly lies in $M$. We will show that $N$ is the unique embedding of $F \times S^1$ in $\partial X^g_n(K)$.  First note that the torus $\partial N$ is incompressible if and only if $K$ is a nontrivial knot. This allows us to eliminate the unknot $U$ from consideration, as $\partial X^g_n(U)$ contains strictly fewer  isotopy classes of incompressible tori than $\partial X^g_n(K)$ for $K \neq U$. 

Thus let us suppose that $K$ is not the unknot, in which case $\partial N$ is incompressible.  Then  $\partial N$ is isotopic to a surface that is either vertical (i.e.~a union of fibers) or horizontal (i.e.~transverse to all fibers); see \cite[Proposition~1.11]{hatcher:3-manifolds}. We can rule out the latter: If $\partial N$ is horizontal, then it meets every fiber in its connected component of $M$ and cuts each fiber into a collection of intervals. In particular, $N$ itself is then an interval bundle whose base is a  surface that is double-covered by the torus $\partial N$. This implies that $N$ must be a twisted interval bundle over a nonorientable closed surface. But any such space has torsion in its first homology, which $N \cong F \times S^1$ does not. 

We may therefore assume that $\partial N$ is a vertical surface, so the Seifert fibration on $M$  restricts to one on $N$. The torus $\partial N$ projects to a simple closed curve that separates the base $B$ of $M$ into subsurfaces $B_1$ and $B_2$ that are bases for the restricted Seifert fibrations on $N$ and $M \smallsetminus \mathring{N}$. Since the unique Seifert fiber structure on $N \cong F \times S^1$ is the product structure, we have $B_1 \cong F$. And since $\partial X^g_n(K) \smallsetminus N$ is the knot complement $S^3 \smallsetminus K$, we see that $M \smallsetminus \mathring{N}$ is a Seifert fibered submanifold of $S^3$, hence $B_2$ is planar; see \cite{budney}.   Thus $B$ is obtained from gluing a planar surface $B_2$ to $B_1 \cong F$ along some component of $\partial B_2$.

Now suppose that $N'$ is another  copy of $F \times S^1$ embedded in $\partial X^g_n(K)$. As above, we conclude that $N'$ lies in $M$ and that its boundary $\partial N'$ is a vertical torus. Moreover, the image of $\partial N'$ under the Seifert fibration $M \to B$ is a separating simple closed curve that bounds a subsurface $B_1' \cong F$ of $B$ (over which $N'$ lies).  However, up to automorphism of $B$, there is a unique embedding of $F$ into $B$, namely $B_1$. Since neither $N$ nor $N'$ contain any exceptional fibers of $M$, we conclude that $M\smallsetminus N\cong M\smallsetminus N'$. It follows that $\partial X^g_n(K)$ has a unique decomposition as the union of $F \times S^1$ and a knot complement in $S^3$. Since a knot is determined by its complement~\cite{gordon-luecke}, it follows that the JSJ decomposition of $\partial X^g_n(K)$ determines the knot $K$.
\end{proof}

\subsection{Handle diagrams and Stein structures} \label{subsec:diagrams}

Let $X$ be a 4-manifold described by a handle diagram. Given a knot $K \subset \partial X$ drawn inside the diagram, we can produce a diagram for the 4-manifold $X'$ obtained by attaching an $n$-framed genus $g$ handle to $X$ along $K$ as follows: First, attach $2g$ 1-handles to $X$ in a small neighborhood of $K \subset \partial X$ as indicated in Figure~\ref{fig:g-handle}. Then attach an $n$-framed 2-handle along the knot in $\partial X \#_{2g}(S^1 \times S^2)$ induced by $K \subset \partial X$. (This construction is easily adapted from the handle decomposition of the disk bundle over a genus $g$ surface with Euler number $n$; see~\cite[Example 4.6.5]{GompfStipsicz4}.)     This can be adapted to the Stein setting as follows:

\begin{lem}\label{lem:stein}
If $X'$ is obtained by attaching a genus $g$ handle to a Stein domain $X$ along a Legendrian knot $K \subset \partial X$ with framing $2g-1$ relative to the contact framing of $K$, then $X'$ admits a Stein structure.
\end{lem}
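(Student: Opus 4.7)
The plan is to decompose the genus $g$ handle into $2g$ 1-handles and one 2-handle, as in the diagrammatic description preceding the lemma, and then build the required Stein structure by attaching each piece in the Stein category. The main tool is Eliashberg's theorem as reformulated by Gompf \cite{gompf:stein}: a Stein structure on a 4-manifold $X$ extends across a 1-handle attached in standard position, and extends across a 2-handle attached along a Legendrian knot $L \subset \partial X$ precisely when the attaching framing equals $\tb(L) - 1$.

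First I would attach the $2g$ 1-handles to $X$ in standard Stein position within a neighborhood of $K$, obtaining a Stein domain $X_1$ whose contact boundary is the contact connect sum of $\partial X$ with $\#_{2g}(S^1 \times S^2)$ carrying its standard tight contact structure. Second, I would arrange the attaching circle $\widetilde{K}$ of the 2-handle---which is obtained from $K$ by routing it through the 1-handles in the commutator pattern $[a_1,b_1]\cdots[a_g,b_g]$---to be Legendrian in $\partial X_1$. This can be done by starting with the given Legendrian representative of $K$ and placing each arc passing through a 1-handle in standard Legendrian position.

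Third, I would carry out the framing computation relating the framing $\tb(K) + (2g-1)$ on $K$ to the induced framing on $\widetilde{K}$. The standard analysis of the disk bundle decomposition of $F \times D^2$ (compare \cite[Example~4.6.5]{GompfStipsicz4}) yields $\tb(\widetilde{K}) = \tb(K) + 2g$, so that the hypothesized framing translates exactly to the framing $\tb(\widetilde{K}) - 1$ required on $\widetilde{K}$. Eliashberg's theorem then extends the Stein structure across the 2-handle, producing the desired Stein structure on $X'$.

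The main obstacle is the framing bookkeeping in the last step, which requires carefully identifying three framings on $\widetilde{K}$: the one induced from the contact framing $\tb(K)$ of $K$, the one induced from the genus $g$ handle structure (equivalently, the Seifert framing of $\widetilde{K}$ shifted by the Euler number of the disk bundle $F \times D^2 \to F$), and the contact framing $\tb(\widetilde{K})$. The shift by exactly $2g$ reflects the commutator pattern of the attaching curve and is a standard but slightly delicate computation best carried out by drawing explicit front diagrams of the attaching region.
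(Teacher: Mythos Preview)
Your proposal is correct and follows essentially the same approach as the paper: decompose the genus $g$ handle into $2g$ Stein 1-handles and one 2-handle, arrange the new attaching curve $\widetilde{K}$ to be Legendrian, verify $\tb(\widetilde{K})=\tb(K)+2g$, and apply Eliashberg--Gompf. The paper carries out the step you flag as the main obstacle by drawing an explicit front-projection replacement near each pair of 1-handles (adding writhe $+3$ and one left cusp, hence $+2$ to $\tb$, per pair), which is exactly the ``explicit front diagrams'' you anticipate needing.
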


\begin{rem}\label{rem:framings}
We follow the standard conventions for handlebody diagrams. In particular, if $w(L_i)$ denotes the writhe of a Legendrian link component $L_i$ in a handlebody diagram, then the $n$-framing curve wraps around $L_i$ a total of $(n-w(L_i))$ times relative to the blackboard framing. Moreover, our Stein handlebody diagrams are drawn in the ``standard form'' from~\cite{gompf:stein}. This implies that Thurston-Bennequin number of $L_i$ is given by $tb(L_i)=w(L_i)-\lambda(L_i)$, where $\lambda(L_i)$ denotes the number of left cusps in the Legendrian diagram of $L_i$. If $L_i$ is oriented, we may also use the diagram to define a rotation number $r(L_i)=\tfrac{1}{2}(t_- - t_+)$, where $t_+$ (resp.~$t_-$) denotes the number of upward-oriented (resp.~downward-oriented) cusps in the diagram of $L_i$.
\end{rem}

\begin{figure}
\center
\def\svgwidth{.65\linewidth}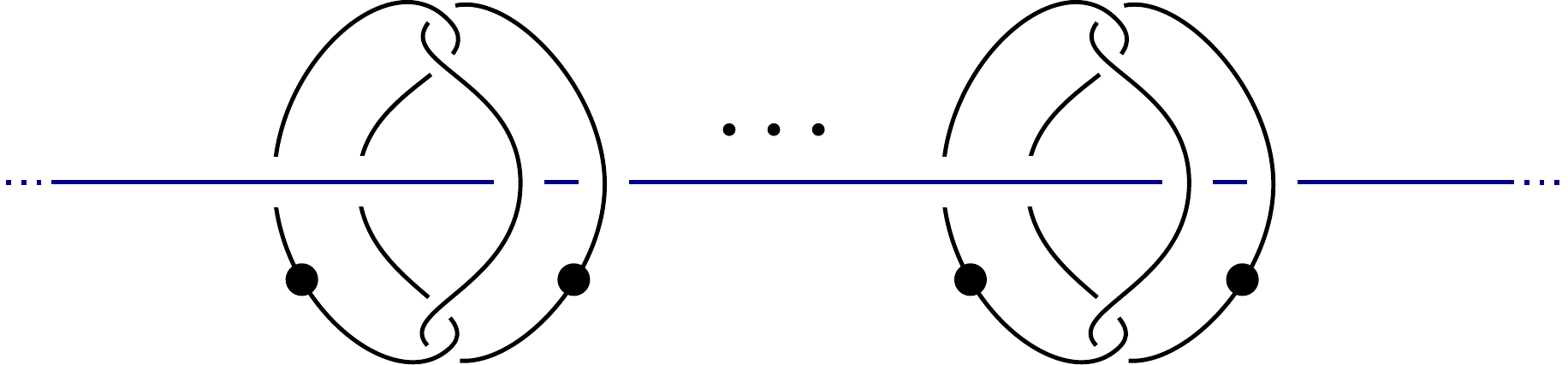 
\caption{An $n$-framed genus $g$ handle attachment using $2g$ 1-handles and one 2-handle.}
\label{fig:g-handle}
\end{figure}

\begin{figure}[b]
\center
\def\svgwidth{.28\linewidth}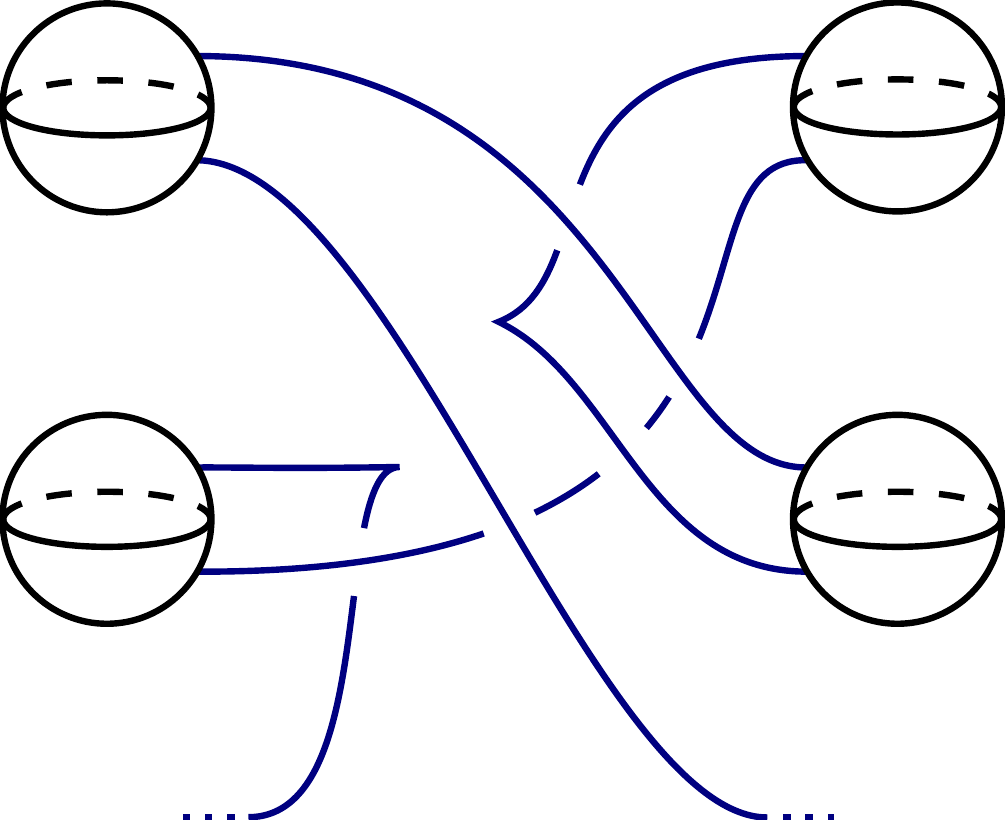 
\captionsetup{width=.9\linewidth}
\caption{In Lemma \ref{lem:stein}, we replace an arc of a Legendrian knot with the given configuration of two 1-handles and an arc traversing the 1-handles algebraically zero times. This increases the writhe by three and adds one left cusp, hence increases the contact framing by 2.}
\label{fig:stein}
\end{figure}
\begin{proof}
Fix a Stein handle diagram for $X$ in which all 2-handles are attached along Legendrian knots in $\natural(S^1 \times B^3)$ in standard form, and let us further assume that $K$ is drawn in standard form in the diagram. Begin by choosing  any smooth arc $\gamma$ of $K$ in the diagram, i.e.~an arc with no cusps or crossings. Attach $g$ pairs of Stein 1-handles in a neighborhood of this arc and, near each such pair of 1-handles, modify $K$ as in Figure~\ref{fig:stein} by replacing a subarc of $\gamma$ with the indicated arc passing through the pair of 1-handles.

Each of these $g$ arcs has writhe equal to $+3$ and contains one left cusp. Though this diagram will not be in standard form, this can be remedied by dragging the 1-handles by their feet out to the sides of the diagram, threading them through the other 2-handle attaching curves. Note that this process does not change the writhe or number of left/right cusps of the new Legendrian knot $\tilde{K}$ in the diagram. Thus we calculate (as in Remark~\ref{rem:framings}) that the contact framing of $\tilde{K}$ differs from that of $K$ by $+2g$. This implies that attaching a genus $g$ handle to $X$ along $K$ with framing $2g-1$ relative to the contact framing of $K$ corresponds to attaching a 2-handle to $X \natural_{2g} (S^1 \times B^3)$ along $\tilde{K}$ with framing $-1$ relative to the contact framing of $\tilde{K}$. It follows that $X'$ admits a Stein structure.
\end{proof}

\begin{rem}\label{rem:rot}
Regardless of the orientation of the Legendrian knot $K \subset \partial X$, an inspection of Figure~\ref{fig:stein} shows that the modified knot $\tilde{K} \subset \partial X \#_{2g} (S^1 \times S^2)$ has $r(\tilde{K})=r(K)$. 
\end{rem}

\begin{rem}
We will be interested in building Stein 4-manifolds by attaching 2-handles along Legendrian knots in the boundary of \emph{the Mazur cork} $M$, which is the contractible Stein 4-manifold defined in Figure \ref{fig:Mazur}.
\end{rem}

\begin{figure}[h]
\center
\def\svgwidth{.45\linewidth}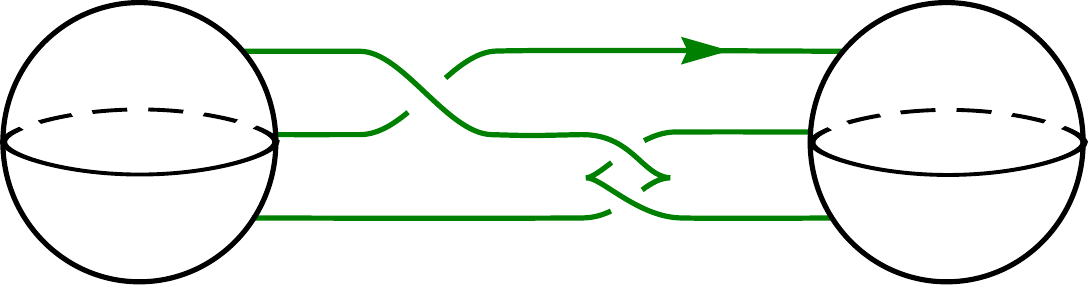 
\caption{The Mazur cork $M$.}
\label{fig:Mazur}
\end{figure}

\section{Spineless 4-manifolds for all genera and intersection forms}
\label{sec:fake-traces}

We now prove a refined statement of  Theorems~\ref{thm:main} and \ref{thm:spineless}.

\begin{thm}\label{thm:detail}
For any integers $g,n \geq 0$, there exist infinitely many 4-manifolds $W$ that are each homotopy equivalent to a closed genus $g$ surface and have intersection form $\langle n \rangle $ such that
\begin{enumerate}[label=\normalfont(\alph*)]

\item $W$ admits a Stein structure,

\item $W$ embeds smoothly in $\#_n \cp$,

\item there is no embedding of a genus $g$ knot trace into $W$  inducing an isomorphism on second homology,  and

\item $W$ is homeomorphic to a smooth 4-manifold $W'$ that admits a smooth spine. When $g=0$, $W'$ can be taken to be the $n$-trace $X_n(K)$ of a slice knot $K$ in $S^3$.
\end{enumerate}
\end{thm}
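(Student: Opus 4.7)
The plan is to generalize the construction of Section~\ref{sec:example} by attaching an $n$-framed genus-$g$ handle to the Mazur cork $M$ along a carefully chosen Legendrian knot $L \subset \partial M$. I would choose $L$ so that: (i) its contact framing equals $n - 2g + 1$, so that Lemma~\ref{lem:stein} provides a Stein structure on $W$; (ii) up to isotopy in $\partial M$, $L$ plays the role of the green curve in a diagram analogous to Figure~\ref{fig:Z1stein}, so that a dot-zero exchange along $M$ produces a diagram which, after handle calculus in the spirit of Figure~\ref{fig:Zsmooth}, is recognizable as a genus-$g$ trace $X^g_n(K)$ for a knot $K$ bounding a smooth ribbon genus-$g$ surface in $B^4$; and (iii) the rotation number $r(L)$ is sufficiently large in absolute value to violate the adjunction inequality in step (c) below. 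With these choices, property (a) is immediate from Lemma~\ref{lem:stein}, and property (d) follows from the cork-twist argument of Section~\ref{sec:example}: the smooth genus-$g$ spine of $W'$ is the union of the ribbon surface in $B^4$ and the core of the genus-$g$ handle. Property (b) follows by diagrammatic maneuvers analogous to claim (1) of Section~\ref{sec:example}: attach a zero-framed 2-handle to a meridian of the dotted circle, cancel with the 1-handle, and simplify; for $n > 0$ the residual 2-handles embed in $\#_n \cp$ rather than $S^4$.

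The heart of the argument is property (c). Suppose for contradiction that $X^g_n(J) \hookrightarrow W$ induces an isomorphism on $H_2$. Composing with $W \hookrightarrow \#_n \cp$ and applying Lemma~\ref{lem:traceglory} yields a smooth genus-$g$ surface in $\#_n \cp \smallsetminus \mathring{B^4}$ bounded by $-J$ and representing the image in $H_2(\#_n\cp)$ of the generator of $H_2(W)$, a class of self-intersection $n$. After orientation reversal, this translates to a smooth genus-$g$ surface in $B^4 \#_n \cpbar$ bounded by $J$ and representing $\pm(e_1 + \cdots + e_n)$, where the $e_i$ denote the exceptional classes. Replacing the singular cone on $J$ inside the 0-handle of $X^g_n(J) \#_n \cpbar$ with this smooth surface, then capping with the core of the genus-$g$ handle, produces a closed smooth surface of genus $2g$ in $X^g_n(J) \#_n \cpbar \subset W \#_n \cpbar$ representing a class $\alpha \pm (e_1 + \cdots + e_n)$ of self-intersection $0$. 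Since blow-ups preserve Stein-ness, $W \#_n \cpbar$ is Stein, so the Lisca-Matic adjunction inequality applies, yielding $|\langle c_1(W \#_n \cpbar), \alpha \pm \sum e_i\rangle| \leq 4g - 2$. The standard blow-up formula for Chern classes reduces this to an inequality of the form $|r(L) \pm n| \leq 4g - 2$, which is contradicted by condition (iii) once $|r(L)|$ is chosen larger than $4g + n - 2$.

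To produce infinitely many examples, I would vary the Legendrian $L$ by local connect-sum with Legendrian representatives of an infinite family of distinct slice knots, as in the proof of Corollary~\ref{cor:lag}. This preserves properties (a)--(d) while yielding pairwise non-diffeomorphic boundaries, distinguished by Heegaard Floer $d$-invariants, the Alexander polynomials of the slice knots, or similar invariants. The main obstacle is the careful bookkeeping in step (c): one must track orientations, signs, and homology classes precisely so that the smooth surface produced via Lemma~\ref{lem:traceglory} really lies in $W \#_n \cpbar$ in the claimed class, and one must ensure that the rotation number condition (iii) suffices to force a contradiction regardless of the sign ambiguity in $\alpha \pm \sum e_i$.
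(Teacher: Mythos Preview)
Your overall strategy matches the paper's: attach a genus-$g$ handle to the Mazur cork along a Legendrian with prescribed $tb$ and large rotation number, use the cork twist for (d), embed in $\#_n\cp$ for (b), and derive (c) from an adjunction inequality applied to a genus-$2g$ surface of square zero in $W\#_n\cpbar$. There are some minor imprecisions (the attaching curve plays the role of the \emph{blue} curve $B$ in Figure~\ref{fig:Z1stein}, not the green one, since $G$ is already the 2-handle of $M$; and $\langle c_1(W),\alpha\rangle$ equals $r(B)-r(G)$, not $r(L)$ alone), but these are bookkeeping issues you already flag.

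The genuine gap is in step (c): the assertion that ``blow-ups preserve Stein-ness'' is false. A Stein domain contains no smooth essential 2-sphere of self-intersection $-1$, since the adjunction inequality~\eqref{eq:adjunction} would force $-2\geq -1+1=0$; thus $W\#_n\cpbar$ is never Stein for $n>0$, and you cannot invoke the Lisca--Mati\'c inequality there directly. The paper circumvents this by first using Lisca--Mati\'c to holomorphically embed the Stein domain $W$ into a closed K\"ahler surface $Z$ of general type with $b_2^+(Z)\geq 2$, then blowing up $Z$ and applying the Seiberg--Witten adjunction inequality in $Z\#_n\cpbar$; the blow-up formula for Seiberg--Witten invariants guarantees that $c_1(Z)+c$ remains a basic class, so the inequality~\eqref{eq:adj} is available. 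This is the step you are missing, and without it your contradiction in (c) does not go through for $n>0$. (For the ``infinitely many'' claim, the paper varies an integer parameter $i$ controlling the number of stabilizations and clasps, distinguishing the resulting $W_i$ by their genus functions; your connect-sum approach is plausible but would require separate verification that properties (a)--(d) persist.)
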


\textbf{Construction of $\boldsymbol{W}$ and proof of (a)}: 
 We will construct $W$ by attaching an $n$-framed, genus $g$ handle along a certain Legendrian knot in the contact boundary of the Stein Mazur cork $M$. Since $M$ is contractible, the resulting 4-manifold $W$ is homotopy equivalent to a closed genus $g$ surface and has intersection form $\langle n \rangle$. We will arrange for our Legendrian knot to have a prescribed Thurston-Bennequin number so that, by Lemma~\ref{lem:stein}, 
 $W$ admits a Stein structure. Moreover, by  carefully arranging the rotation number 
 of the Legendrian attaching knot, we will control the evaluation of the first Chern class $c_1(W)$ on a preferred generator of $H_2(W) \cong \zz$.

\begin{figure}
\center
\def\svgwidth{.9\linewidth}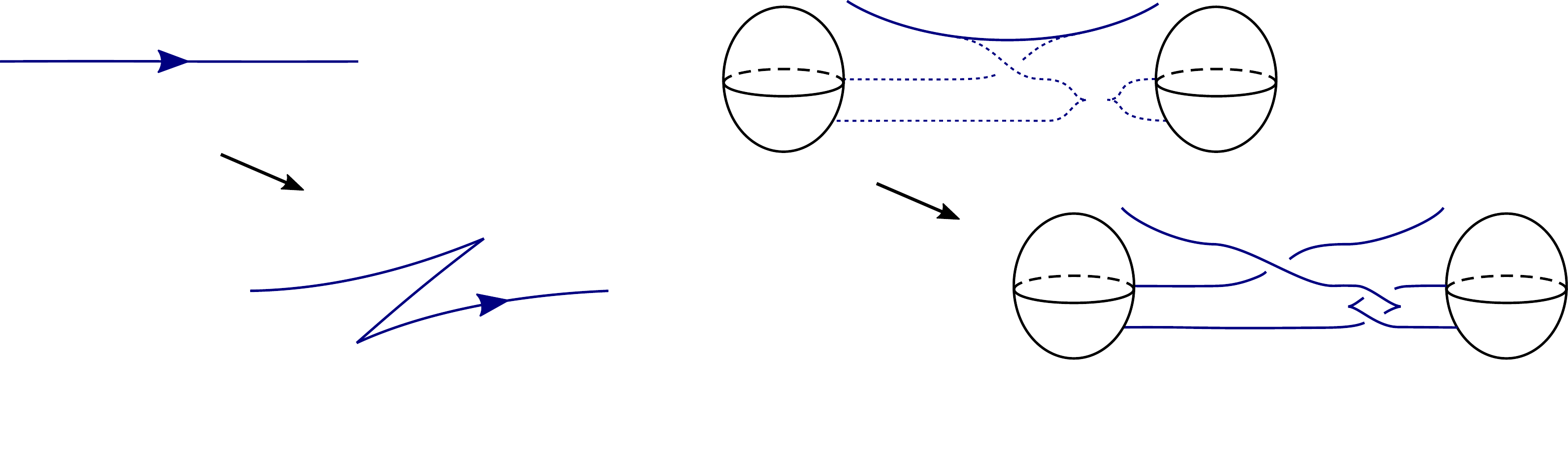 
\caption{(a) A positive Legendrian stabilization. (b) Adding a clasp across a 1-handle.}
\label{fig:claspstab}
\end{figure}

To describe these examples explicitly, we begin by again considering the (blue) oriented knot $B$ in the boundary of $M$ depicted in  Figure~\ref{fig:Z1stein}. Observe that the diagram of $B$ is in standard form with respect to the 1-handle and 2-handle of $M$.  Now choose any positive integer $i$ and perform  $4g+n+2i$ positive stabilizations of $B$ as in Figure~\ref{fig:claspstab}(a). After these stabilizations, the rotation number of $B$ (as defined in Remark~\ref{rem:framings}) is $r(B)=1+4g+n+2i$ and the Thurston-Bennequin number  is $tb(B)=1-4g-n-2i$. Next we modify  the smooth isotopy type of $B$ to achieve the desired Thurston-Bennequin number.   We define \emph{adding a clasp over the 1-handle} to be the modification of the Legendrian arc depicted in Figure~\ref{fig:claspstab}(b). We now modify $B$ by iteratively adding $n+g+i$ clasps to its bottommost strand. This modification increases $tb$ by $2(n+g+i)$ and preserves $r$, thus the new blue curve has $tb(B)=1-2g+n$ and $r(B)=1+4g+n+2i$. 

\begin{figure}
\center
\def\svgwidth{.7\linewidth}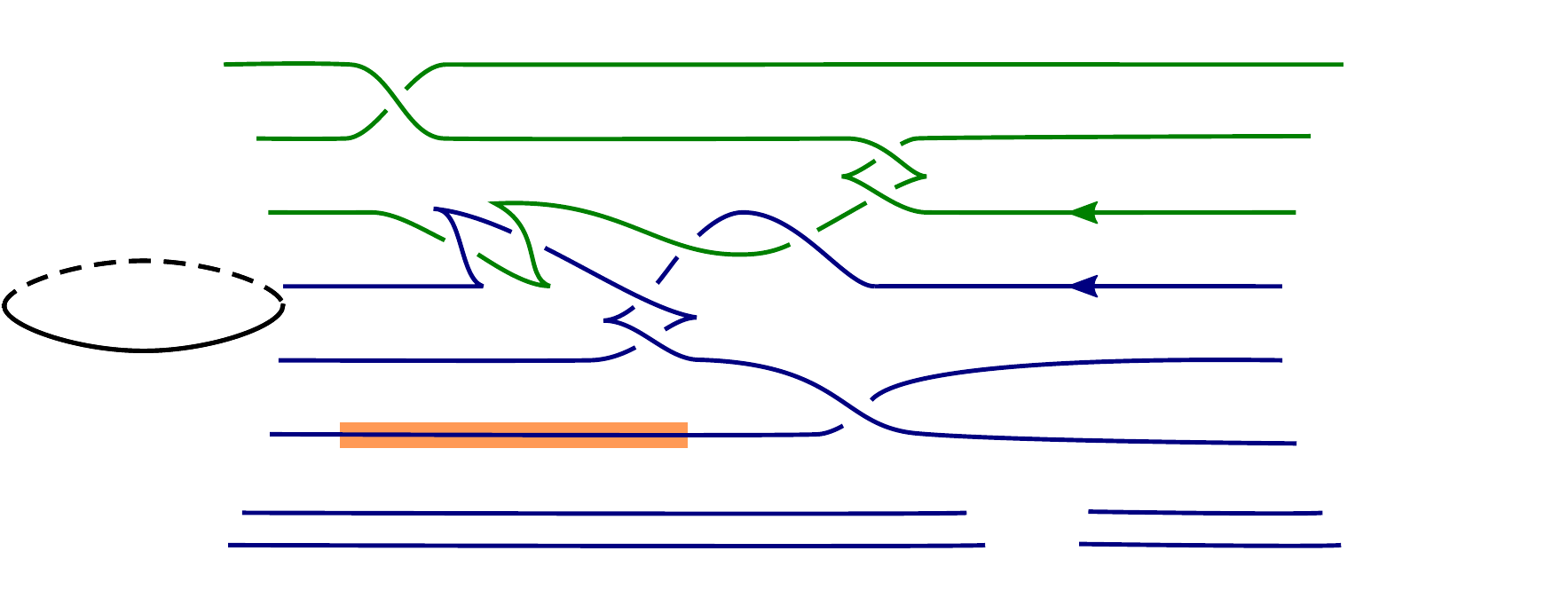 
\caption{A schematic of the 4-manifold $W$. For any nonnegative integer $i$, $W$ has $2g$ 1-handles in the neighborhood $\gamma$ as in Figure \ref{fig:g-handle}, $4g+n+2i$ positive stabilizations in the region $S$ as in Figure \ref{fig:claspstab}(a), and $n+g+i$ clasps in $C$ as in Figure~\ref{fig:claspstab}(b).}
\label{fig:Wschem}
\end{figure} 

Now let $W$ be obtained from $M$ by attaching an $n$-framed, genus $g$ handle along $B$.  Since $n-tb(B)=2g-1$, Lemma~\ref{lem:stein} implies that $W$ admits a Stein structure. See Figure \ref{fig:Wschem} for a schematic picture of $W$. 

Observe that, as in the proof of Lemma~\ref{lem:stein}, we have a natural Stein handle description of $W$  with $(2g+1)$ 1-handles and two 2-handles attached along knots $\tilde G,\tilde B \subset \natural_{2g+1}S^1 \times B^3$, where $\tilde G$ and $\tilde B$ are the knots in the boundary of the 1-handlebody  induced by the knots $G$ and $B$. To understand the  Chern class $c_1(W)$ associated to the Stein structure on $W$, we consider this Stein handle diagram of $W$. Note that $H_2(W)$ is generated by a class $\alpha$ corresponding to the difference of the 2-handles attached along the oriented curves $\tilde B$ and $\tilde G$. Following \cite[Proposition~2.3]{gompf:stein}, the evaluation of $c_1(W)$ on $\alpha$ is given by the difference in the rotation numbers of $\tilde B$ and $\tilde G$. As noted in Remark~\ref{rem:rot}, these coincide with the rotation numbers of the underlying oriented knots $B$ and $G$ in $S^1 \times B^3$. Therefore, we have
\begin{equation}\label{eq:chern}
\langle c_1(W) ,\alpha \rangle = r(B)-r(G) \geq (4g+n+1+2i)-1=4g+n+2i. 
\end{equation}
In particular, by increasing $i$, we obtain a family of Stein domains $W_i$ such that $\langle c_1(W_i),\alpha\rangle$ grows arbitrarily large.
\hfill $\square$

\textbf{Proof of (b)}: 
 First observe that $W$ embeds smoothly in $X_{n}^g(U)$; indeed one recovers $X_{n}^g(U)$ by attaching a 2-handle to $W$ to cancel the 1-handle in the Mazur cork $M\subset W$  and then a 3-handle to cancel the green 2-handle $G$. Note that this embedding $W \hookrightarrow X^g_n(U)$ induces an isomorphism on second homology. Next observe that $U$ bounds a genus $g$ surface $\Sigma$ in $B^4\#_n\cpbar$ such that $\Sigma$ represents the $(1,\ldots,1)$ class in $H_2(B^4 \#_n \cpbar, S^3) \cong \zz^n$. By Lemma~\ref{lem:traceglory}, there exists a smooth embedding $X_{n}^g(U) \hookrightarrow \#_n \cp$ carrying the generator of $H_2(X_{n}^g(U))$ to the $(1,\ldots,1)$ class in $H_2(\#_n \cp)$.  \hfill $\square$

\smallskip

\textbf{Proof of (c)}: 
 For the sake of contradiction, suppose there exists a knot $J\subset S^3$ and a smooth embedding  $X_n^g(J) \hookrightarrow W$ inducing an isomorphism on second homology.  
As argued in the proof of part (b), there exists a smooth embedding $W \hookrightarrow \#_n \cp$ carrying the generator of $H_2(W)$ to the $(1,\ldots,1)$ class in $H_2(\#_n \cp)$.  Composing these embeddings and applying Lemma~\ref{lem:traceglory}, we see that $J$ bounds a smooth genus $g$ surface in $B^4\#_n \cpbar$ representing the class $(1,\ldots, 1)$ in $H_2(B^4 \#_n \cpbar,S^3)$.  Thus the blowup $X^g_n(J) \#_n \cpbar$ contains  a smooth surface $\Sigma$ of genus $2g$ formed from the core of the genus $g$ handle and the genus $g$ surface $J$ bounds in $B^4\#_n \cpbar$. Moreover, $X^g_n(J) \#_n \cpbar$ embeds in $W \#_n \cpbar$, and the  of the surface $\Sigma$ 
 represents the class $\alpha +(1,\ldots,1)$ in $H_2(W \#_n \cpbar)$, where $\alpha$ denotes the preferred generator of $H_2(W)$. Note that, since $\alpha \cdot \alpha =n$ and $(1,\ldots,1)\cdot (1,\ldots,1)=-n$, we have $\Sigma\cdot \Sigma=0$. 

We claim that  $W\#_n \cpbar$ cannot contain such a surface. First, since it is Stein, $W$ admits a holomorphic embedding $\iota$ into a closed 
K\"ahler surface $Z$ of general type with $b_2^+(Z) \geq 2$ \cite{lisca-matic}. In particular, we have $\iota^*c_1(Z)=c_1(W)$, where  $c_1(W)$ and $c_1(Z)$ are the distinguished Chern classes  associated to the complex structures on $W$ and $Z$.   The blowup $Z\#_n \cpbar$ also has distinguished Chern class $c_1(Z)+c$, where $c$ is the Poincar\'e dual to the (1,\ldots,1) class in $H_2(\#_n \cpbar)$.  The Seiberg-Witten invariant of $Z$ associated to $c_1(Z)$ is nonvanishing \cite{witten:monopoles}, and the blow-up formula for Seiberg-Witten invariants implies the same for $c_1(Z)+c \in H_2(Z \#_n \cpbar)$ \cite{fintushel1stern:immersed}. Therefore we have the following adjunction inequality for homologically essential embedded surfaces $F$ of non-negative self-intersection \cite{km:thom, mst:product, fintushel1stern:immersed}: 
\begin{equation}\label{eq:adj}
2g(F)-2 \geq |\langle c_1(Z)+c,[F]\rangle|+F\cdot F.
\end{equation}

The embedding of the 4-manifolds $W \#_n \cpbar\hookrightarrow Z \#_n \cpbar$ gives rise to an embedding 
$\Sigma\hookrightarrow Z \#_n \cpbar$ with $[\Sigma]=\iota_*(\alpha) +(1,\ldots,1)$ and, as remarked above, $\iota^*c_1(Z)=c_1(W)$. So \eqref{eq:adj} gives
\begin{equation*}
2g(\Sigma)-2 \geq|\langle c_1(Z)+c,[\Sigma]\rangle|+\Sigma\cdot\Sigma  = |\langle c_1(W),\alpha \rangle - n| +0 \geq 4g = 2g(\Sigma),
\end{equation*}
where the rightmost inequality appeals to \eqref{eq:chern}. This yields the desired contradiction. \hfill $\square$

\begin{rem}
As remarked after \eqref{eq:chern}, we can actually define an entire family of Stein domains $W_i$ such that $\langle c_1(W_i),\alpha \rangle$ grows linearly with $i$ (without changing $g$ or $n$). For sufficiently large $i$, the adjunction inequality then leads to a lower bound $g_i$ on  the minimal genus of a smoothly embedded surface representing $\alpha \in H_2(W_i)$. Of course, $\alpha$ can always be represented by \emph{some} smoothly embedded surface $\Sigma_i$ in $W_i$. Hence, for fixed $n$ and $g$,  there always exists $j>i$ such that $W_j$ has $g_{j}>g(\Sigma_i)$, hence $W_i$ and $W_j$ are distinguished by their genus functions. This ensures that infinitely many of the 4-manifolds $W_i$ are distinct.
\end{rem}
\begin{figure}\center
\def\svgwidth{.6\linewidth}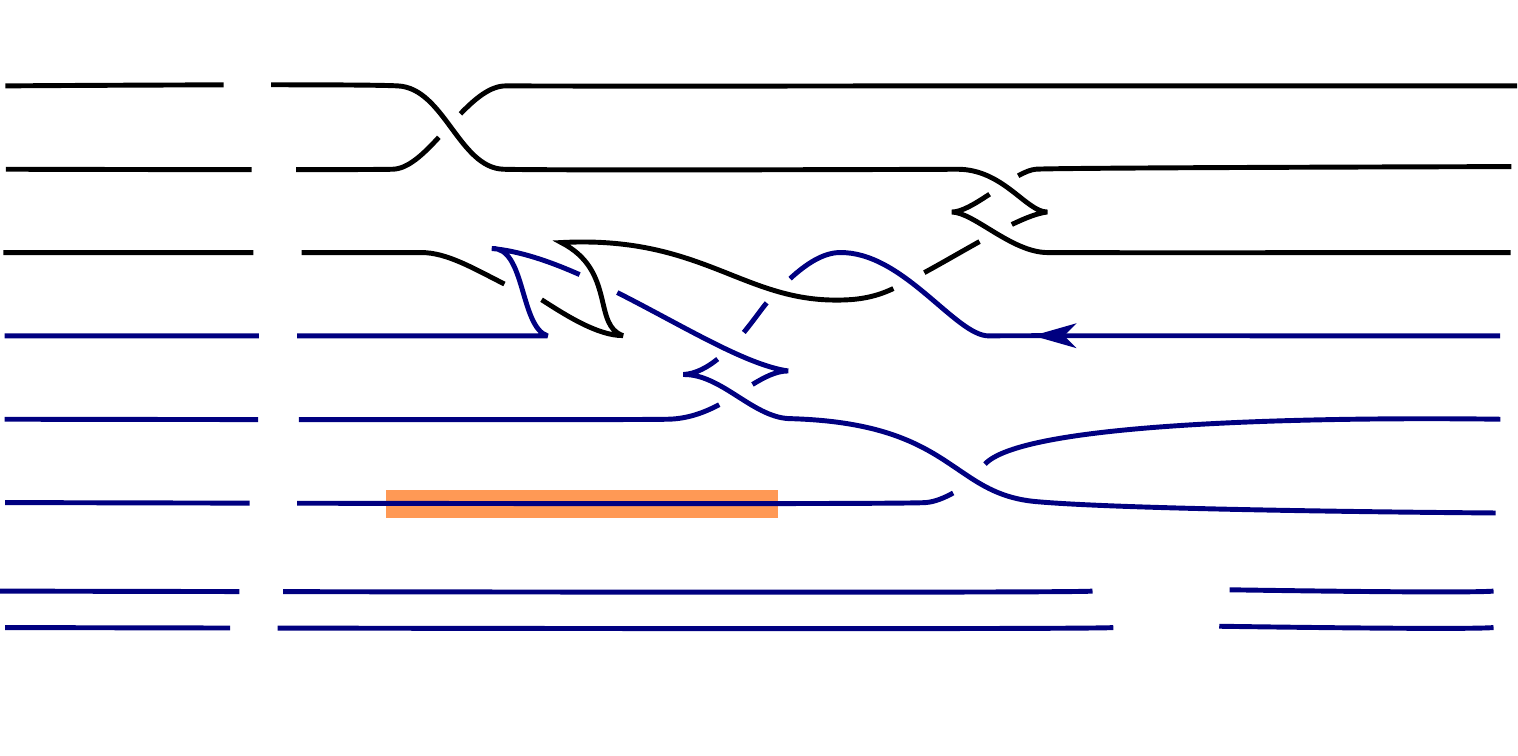 
\caption{A diagram for $W'$ which should be completed by identifying the left edge with the right.}
\label{fig:Zprime}
\end{figure}

\textbf{Proof of (d)}: 
 Consider again the Mazur cork $M$ embedded in $W$, and let $W'$ be obtained from $W$ by a cork twist along $M$ using the involution $\tau:\partial M \to \partial M$ as described in \S\ref{sec:example}. Then $W'$ is homeomorphic to $W$ \cite{akbulut:cork,freedman}. A handle diagram for $W'$ is shown in Figure~\ref{fig:Zprime}, obtained by redrawing Figure~\ref{fig:Wschem} in dotted circle notation  then performing a dot-zero exchange.

We can also view $W'$ as being obtained from $M$ by attaching an $n$-framed genus $g$ handle along the knot in $\partial M$ corresponding to the curve $B'$ in Figure~\ref{fig:Zprime}. Since $B'$ may be isotoped to be unlinked from the dotted circle, it represents a knot in $\partial M$ that bounds a smooth slice disk in $M$. (See the left side of Figure~\ref{fig:Wslide}.)  Thus this slice disk and the core of the genus $g$ handle together form a smooth genus $g$ surface,  easily seen to be a smooth spine of $W'$. 

\begin{figure}\center
\def\svgwidth{\linewidth}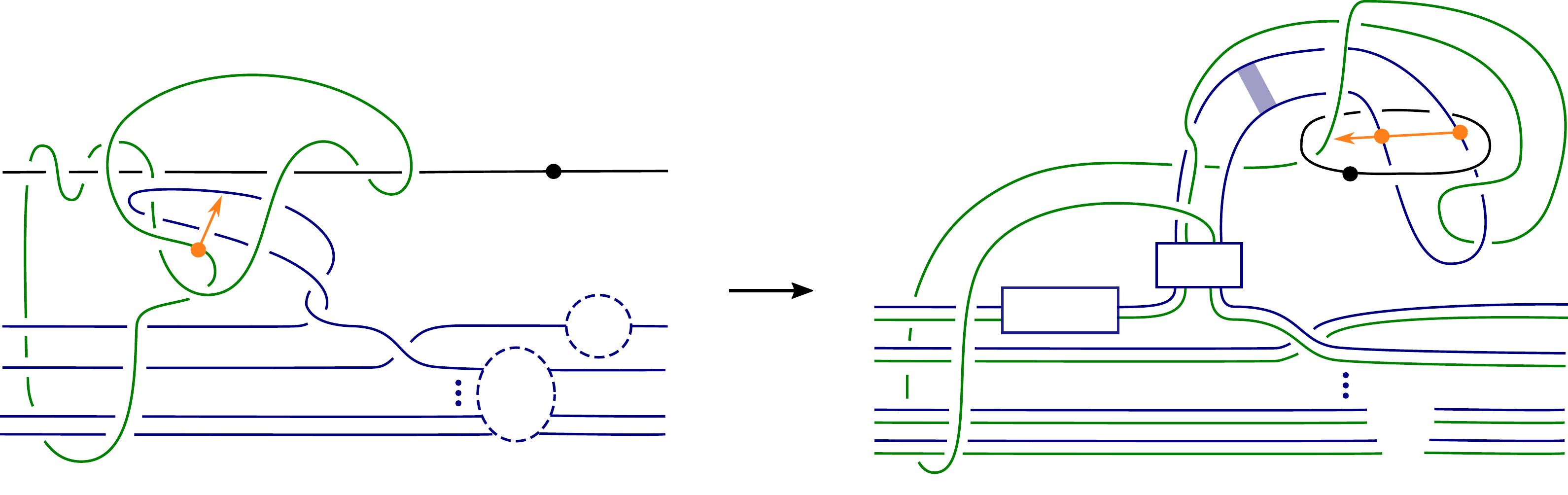 
\caption{On the left, the diagram of $W'$ with $g=0$ has been isotoped to indicate the slide of $G'$ over $B'$ that will turn the 1-handle and $G'$ into a canceling pair. On the right, we indicate the pair of slides which will produce $B''$; after these slides are performed we may cancel $G''$ and the 1-handle.  Labeled boxes denote full twists.}
\label{fig:Wslide}
\end{figure}

For $g=0$, we claim that $W'$ is the $n$-trace $X_n(K)$ of a slice knot $K$ in $S^3$.     
Performing the handleslide of $G'$ over $B'$   indicated in Figure \ref{fig:Wslide}, we obtain a new curve $G''$ that may be isotoped to run over the 1-handle exactly once geometrically. From this diagram we can see that the 1-handle forms a canceling pair with the 2-handle attached along $G''$. In order to perform the cancellation, we first perform the indicated slides of $B'$ over $G''$ to yield a new blue curve that we denote by $K$. After canceling, we obtain a handle structure with exactly one 0-handle and one $n$-framed 2-handle attached along $K$. 

This is a knot trace $X_n(K)$, and we observe that $K$ is ribbon; after the slides and cancellation, performing the band move indicated in the right side of Figure~\ref{fig:Wslide} yields a 2-component unlink. 
\hfill $\square$

\smallskip

This concludes the proof of Theorem~\ref{thm:detail}. For $n \geq 0$,  Theorem~\ref{thm:main} follows immediately; for $n<0$, one simply reverses the orientations of the 4-manifolds constructed in Theorem~\ref{thm:detail}. Theorem~\ref{thm:spineless} follows from Theorem~\ref{thm:detail} just as Corollary~\ref{cor:spineless} followed from Theorem~\ref{thm:main}:

\begin{proof}[Proof of Theorem~\ref{thm:spineless}]
Given a closed, orientable surface $\Sigma$, let $W$ and $W'$ be as in Theorem~\ref{thm:detail} with $g=g(\Sigma)$ and any $n \in\mathbb{N}$. Since $W$ is homeomorphic to $W'$ and the latter contains $\Sigma$ as a smooth spine, $W$ contains a topological locally flat spine given by the image of $\Sigma$ under the homeomorphism.

For the sake of contradiction, suppose that $W$ contains $\Sigma$ as a PL spine. An argument entirely analogous to the one given in the proof of Corollary~\ref{cor:spineless} shows that $\Sigma$ has a neighborhood diffeomorphic to the $n$-framed genus $g$ trace $X^g_n(K')$ of a knot $K'$ in $S^3$.  Since $[\Sigma]$ generates $H_2(W)$, the inclusion $X_n^g(K') \hookrightarrow W$ induces an isomorphism on second homology, contradicting Theorem~\ref{thm:detail}(c).
\end{proof}

\begin{proof}[Proof of Theorem~\ref{thm:approx}]
For $\Sigma$ a closed, orientable surface, consider any smooth embedding $\iota:\Sigma\hookrightarrow Z$, where $Z$ is a smooth 4-manifold. The image $\iota(\Sigma)$ has a tubular neighborhood given by a disk bundle $N$ over $\Sigma$ with Euler number $n=\iota(\Sigma)\cdot \iota(\Sigma)$, the self-intersection number of $\iota(\Sigma)$. The embedding  $\iota:\Sigma \hookrightarrow Z$ corresponds to the inclusion of the zero-section of $N$. We will construct a topological locally flat embedding of $\Sigma$ into $Z$ with image in $N$ that is homotopic to $\iota:\Sigma \hookrightarrow Z$ and show that it cannot be approximated by PL embeddings.

To that end, let $W$ be one of the 4-manifolds from Theorem~\ref{thm:spineless}, where $n=\iota(\Sigma)\cdot \iota(\Sigma)$  and $g=g(\Sigma)$. By Theorem~\ref{thm:spineless}, there exists a topological locally flat embedding $f:\Sigma \hookrightarrow W$ that induces a homotopy equivalence yet is not homotopic in $W$ to any piecewise-linear embedding. As shown in the proof of Theorem~\ref{thm:detail}, $W$ embeds smoothly into the $n$-framed genus $g$ trace $X^g_n(U)$ of the unknot $U$, which is simply the disk bundle $N$ of Euler number $n$ over $\Sigma$. Thus we may let $f_0: \Sigma \hookrightarrow W \hookrightarrow N \subset Z$ denote the topological embedding obtained by composing $f$ with the inclusion of $W$ into $N \subset Z$. Note that the topological embedding $f_0$ induces a homotopy equivalence of $\Sigma$ and $N$, as does the embedding $\iota$ of $\Sigma$  as the zero-section of $N$. Since any two homotopy equivalences $\Sigma \to N$ are homotopic (up to reparametrization by automorphisms of $\Sigma$ \cite[Theorem~8.1]{farb-margalit}), we see that $f_0$ and $\iota$ are homotopic as maps to $N$, hence as maps to $Z$.

For the sake of contradiction, suppose that the topological embedding $f_0: \Sigma \hookrightarrow Z$ can be approximated by PL embeddings. By the definition of the compact-open topology on the space of continuous maps $C^0(\Sigma,Z)$, and since $\Sigma$ is compact and $f_0(\Sigma)$ lies in the open subset $\mathring{W}\subset Z$, there is an open neighborhood of $f_0\in C^0(\Sigma, Z)$ such that all maps in this neighborhood also have image in $\mathring{W} \subset Z$. And since $\Sigma$ is compact, $C^0(\Sigma,Z)$ is locally path-connected; this can be proven directly or by appealing to the fact that $C^0(\Sigma,Z)$ is a Banach manifold \cite{eells}. Therefore any PL embedding $f_1:\Sigma \hookrightarrow Z$ that is sufficiently close to $f_0$ can be connected to $f_0$ by a path in this neighborhood of $f_0 \in  C^0(\Sigma,Z)$. This implies that the original topological embedding $f: \Sigma \hookrightarrow W$ is homotopic (through maps into $W$) to a piecewise-linear embedding, yielding the desired contradiction.
\end{proof}

\section{Pinched surfaces and 2-complexes}\label{sec:2comlex}

In this section we prove Theorem~\ref{thm:2-complex}. The proof calls on basic ideas from piecewise-linear topology, and we refer the reader to \cite{bryant:PL} for background. In particular, recall that a simplicial complex is a triangulation of a piecewise-linear $n$-manifold if and only if the link of every $m$-simplex is homeomorphic to $S^{n-m-1}$ or $B^{n-m-1}$, according to whether $\sigma$ lies in the interior or boundary of the manifold, respectively. (The \emph{link} of a simplex $\sigma$ in a simplical complex $A$ is the subcomplex $\lk(\sigma,A)$ consisting of all simplices $\tau \in A$ disjoint from $\sigma$ such that there is a simplex in $A$ containing both $\sigma$ and $\tau$ as faces.) 

We say that a finite 2-complex $F$ is a \emph{pinched surface} if the link of every vertex in $F$ is homeomorphic to $B^1$ or to a disjoint union of circles $S^1$. The \emph{multiplicity} of a vertex $p\in F$ is the number of connected components in the link of $p \in F$. If $P \subset F$ denotes the set of vertices with multiplicity greater than one, then $F \smallsetminus P$ is a noncompact surface, and we define the \emph{genus} of $F$ to be the genus of $F \smallsetminus P$. (The genus of a disconnected surface is taken to be the sum of the genera of its connected components.)

\begin{prop}\label{prop:pinched}
For any finite 2-complex $C$ and any class $\alpha \in H_2(C)$, there exists a pinched surface $F$ such that for any 4-manifold $X$ and piecewise-linear embedding $\iota : C \hookrightarrow X$, there is a piecewise-linear embedding  $\iota':F \hookrightarrow X$ with $\iota'(F)$ representing  $\iota_*(\alpha)\in H_2(X)$.
\end{prop}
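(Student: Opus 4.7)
The plan is to construct $F$ abstractly from a simplicial cycle representing $\alpha$, and then realize it as a PL embedding inside a regular neighborhood of $\iota(C)$ in $X$.

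First, I would fix a triangulation of $C$ and write $\alpha$ as a simplicial $2$-cycle $\sum_i n_i [\sigma_i]$, with each $\sigma_i$ an oriented $2$-simplex. Form a multiset $\mathcal{T}$ containing $|n_i|$ copies of each $\sigma_i$, oriented with the sign of $n_i$. The condition $\partial(\sum_i n_i [\sigma_i]) = 0$ says that, along every edge $e$ of $C$, the corner-edges from $\mathcal{T}$ meeting $e$ split into equal numbers of positive and negative ones, so there is a perfect matching pairing each positive corner-edge with a negative one. Define $F$ to be the quotient of $\bigsqcup_{\tau \in \mathcal{T}} \tau$ by the identifications prescribed by these matchings.

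I would then check that $F$ is a pinched surface. For a vertex $v \in F$, the link has one vertex for each matched pair at an edge of $C$ through $v$, and one edge for each $\tau \in \mathcal{T}$ with $v$ as a vertex (joining the two pair-vertices corresponding to the two corner-edges of $\tau$ at $v$). Since every matched pair contains exactly two corner-edges, each link-vertex has degree $2$, so $\mathrm{lk}(v,F)$ is a disjoint union of circles (with any $B^1$ components arising only from corner-edges on $\partial C$ that do not appear in $\mathcal{T}$). By construction, the projection $F \to C$ realizes the $2$-cycle $\sum_i n_i [\sigma_i]$, so its image represents $\alpha$ in $H_2(C)$.

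Given the PL embedding $\iota: C \hookrightarrow X$, I would construct $\iota' \colon F \hookrightarrow X$ inside a PL regular neighborhood $N$ of $\iota(C)$. For each $\sigma_i$, place $|n_i|$ parallel PL pushoffs of $\iota(\sigma_i)$ in $N$, slightly separated in the two-dimensional normal direction. To realize the matchings locally: along each edge $e$ of $C$ the PL normal slice is a $3$-ball in which the pushed-off corner-edges appear as disjoint parallel arcs, and paired arcs can be joined by disjoint PL bands since the slice is $3$-dimensional; near each vertex $v \in C$ the link of $\iota(v)$ in $X$ is a PL $3$-sphere containing $\iota(\mathrm{lk}(v,C))$, and one embeds $\mathrm{lk}(v,F)$ there as a disjoint union of PL circles close to $\iota(\mathrm{lk}(v,C))$, consistent with the incident bands. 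The image $\iota'(F)$ then carries the $2$-cycle $\sum_i n_i [\iota(\sigma_i)] = \iota_*(\alpha) \in H_2(X)$.

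The main technical obstacle is the global consistency of these local models. The natural order of construction is to first choose the PL embeddings $\mathrm{lk}(v,F) \hookrightarrow \mathrm{lk}(\iota(v),X)$ at every vertex compatibly with the matchings, and then interpolate the pushoffs and bands along each edge of $C$, taking advantage of the $3$-dimensional slack in the normal slice. The $2$-regularity of the vertex links — itself a direct combinatorial consequence of the cycle condition $\partial \alpha = 0$ — is exactly what allows the interpolation to proceed without introducing any additional branching, and PL regular neighborhood theory assembles the local pieces into the desired PL embedding $\iota' \colon F \hookrightarrow X$.
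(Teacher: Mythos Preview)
Your abstract construction of $F$ is reasonable and in fact yields an honest closed surface (once edges are matched in pairs, each vertex of $F$ has a single circle as its link), which is a perfectly good pinched surface for the proposition. The paper builds a genuinely different $F$: it pinches $|n_i|$ copies of each simplex at their barycenters, attaches bands only along a set of ``cutting edges'' that reduce the $1$-skeleton to a tree, and then cones off the remaining boundary to a single new pinch point. Both choices are legitimate; the difference shows up in how the embedding is produced.

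The real gap in your argument is the embedding step. You invoke ``$|n_i|$ parallel PL pushoffs of $\iota(\sigma_i)$ \dots\ in the two-dimensional normal direction,'' but a PL $2$-simplex in a $4$-manifold is typically \emph{not} locally flat: interior points can be cone points over nontrivial knots, and there is no normal direction at such points. So ``parallel pushoff'' is undefined exactly where it matters, and you cannot assume the $|n_i|$ copies are disjoint disks (as your abstract $F$ requires) without further work. You also acknowledge but do not resolve the global consistency of the vertex/edge models; ``PL regular neighborhood theory assembles the local pieces'' is an assertion, not an argument. The paper circumvents both issues by never pushing off a $2$-simplex at all: using Lemma~\ref{lem:nbd} it takes a $B^4$ neighborhood of each (possibly non-locally-flat) disk, places $|n_i|$ parallel copies of the boundary circle in $\partial B^4$, and cones them to a single interior point---this is why its $F$ has pinch points at barycenters. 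The same coning trick, applied inside a $B^4$ neighborhood of a spanning tree of the $1$-skeleton, handles the vertices in one stroke and replaces your delicate local-to-global assembly with a single cone. If you want to salvage your approach, you would need an analogue of Lemma~\ref{lem:nbd} together with an explicit construction of $|n_i|$ \emph{disjoint} PL disks in that $B^4$ bounding prescribed parallels of $\partial\Delta_i$, compatible with your edge bands; at that point you are essentially redoing the paper's argument.
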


We begin with a lemma:

\begin{lem}\label{lem:nbd}
Let $X$ be a piecewise-linear 4-manifold with boundary and let $(\Delta,\partial \Delta) \hookrightarrow (X,\partial X)$ be a properly embedded, piecewise-linear 2-disk that is locally flat near the boundary. Then there exists a neighborhood $N$ of $\Delta$ that is piecewise-linearly homeomorphic to $B^4$ such that $N \cap \partial X$ is a regular neighborhood of the curve $\partial \Delta$.
\end{lem}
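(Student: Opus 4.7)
The plan is to take $N$ to be a piecewise-linear regular neighborhood of $\Delta$ in $X$ and to verify that it is a PL $4$-ball by combining the regular neighborhood theorem with the classical theorem that a regular neighborhood of a collapsible polyhedron in a PL manifold is a PL ball.

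First, I would fix a PL triangulation of $X$ in which $\partial X$ and $\Delta$ appear as subcomplexes and which is compatible, near $\partial \Delta$, with the product structure on $\Delta$ provided by local flatness. Let $N$ be a second-derived regular neighborhood of $\Delta$ with respect to this triangulation. By the regular neighborhood theorem, $N$ is a compact PL $4$-submanifold of $X$ containing $\Delta$, there is a simplicial collapse $N \searrow \Delta$, and $N \cap \partial X$ is the corresponding second-derived regular neighborhood of $\Delta \cap \partial X = \partial \Delta$ in the $3$-manifold $\partial X$. Because local flatness guarantees that $\partial \Delta$ is a PL (locally flat) circle in $\partial X$, this restricted regular neighborhood is a solid torus, which gives the second conclusion of the lemma.

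To identify $N$ with $B^4$, I observe that the $2$-disk $\Delta$ is collapsible, and composing the collapses $N \searrow \Delta \searrow \{\mathrm{pt}\}$ shows that $N$ itself is a collapsible compact PL $4$-manifold. A classical theorem of Whitehead then asserts that any collapsible compact PL manifold is PL homeomorphic to a ball, giving $N \cong B^4$.

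The formal pieces---constructing a compatible triangulation, invoking the regular neighborhood theorem, and identifying $N \cap \partial X$---are routine. The substantive input is Whitehead's theorem on collapsible PL manifolds, which we cite as a black box (see, e.g., Rourke--Sanderson). The main role of the local flatness hypothesis is twofold: it ensures that $\partial \Delta$ is a genuine PL-embedded circle in $\partial X$ so that the restricted regular neighborhood is a solid torus, and it supplies the product structure needed to make the triangulation behave well near the boundary, which is the only place where the proof is at risk of subtlety.
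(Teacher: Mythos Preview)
Your argument is correct, but it follows a different route from the paper's. The paper proceeds more concretely: it encloses all of the singular points of $\Delta$ inside a single PL $4$-ball $B$ (a regular neighborhood of an arc joining them), so that $\Delta \smallsetminus \mathring{B}$ is a locally flat annulus in $X \smallsetminus \mathring{B}$; a regular neighborhood $A$ of this annulus is then a genuine product $(S^1 \times D^1) \times D^2$, and $N = A \cup B$ is obtained by gluing a thickened annulus onto $B^4$ along a solid torus in its boundary, which visibly returns $B^4$. This avoids invoking the general theorem that collapsible PL manifolds are balls, at the cost of a small amount of hand-built geometry. Your approach is cleaner and shorter, trading that explicit construction for a citation of Whitehead's theorem (which is standard and valid in all dimensions, including $4$, since collapsibility is far stronger than mere contractibility). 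Both approaches rely on local flatness near $\partial \Delta$ in the same way---to ensure the restricted neighborhood in $\partial X$ is a solid torus---and both are legitimate; the paper's version has the minor advantage of keeping the argument self-contained at the level of regular neighborhoods of arcs and locally flat surfaces.
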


\begin{proof}
Take a path joining all the singular points in $\Delta$, and take a regular neighborhood $B \cong B^4$ of this path so that $B$ meets $\Delta$ in a disk $\Delta' \subset \Delta$. Then $\Delta \smallsetminus \mathring{\Delta}'$ is a locally flat PL annulus that is properly embedded in $X \smallsetminus \mathring{B}$. This annulus has a regular neighborhood $A \subset X \smallsetminus \mathring{B}$ that is  PL homeomorphic to $(\Delta \smallsetminus \mathring{\Delta}') \times D^2 \cong (S^1 \times D^1 ) \times D^2$ and which meets $\partial X$ and $\partial B$ in regular neighborhoods of $\partial \Delta$ and $\partial \Delta'$, respectively. The union $N=A \cup B$ in $X$ is homeomorphic to $B \cong B^4$ and provides the desired neighborhood of $\Delta$.
\end{proof}

\begin{proof}[Proof of Proposition~\ref{prop:pinched}]
For a fixed class $\alpha \in H_2(C;\mathbb{Z})$, we will abstractly define the pinched surface $F$ associated to $(C,\alpha)$. Then we will show that any piecewise-linear embedding of $C$ induces one for $F$.

\emph{Definition of $F$, Step 1.} Choose a representative 2-chain $\sum_{i=1}^m n_i \Delta_i$ for $\alpha$, where each $\Delta_i$ is a 2-simplex in $C$, $\Delta_i = \Delta_j$ only if $i=j$, and $n_i \in \zz$. To begin, let $F_0$ be a disjoint union of 2-simplices $\Delta_{i,j}$ for $i=1,\ldots,m$ and $j=1,\ldots,n_i$, oriented using the sign of $n_i$. Let $p_{i,j}$ be the barycenter of $\Delta_{i,j}$, and let $F_1$ be obtained from $F_0$ by identifying $p_{i,j} \sim p_{i,1}$ for all $1 \leq j \leq n_i$. That is, $F_1$ is obtained by taking $n_i$ disjoint copies of each 2-simplex $\Delta_i$  and pinching them together at a single point in their interiors as in Figure~\ref{fig:pinch}.

\begin{figure}\center
\includegraphics[width=.6\linewidth]{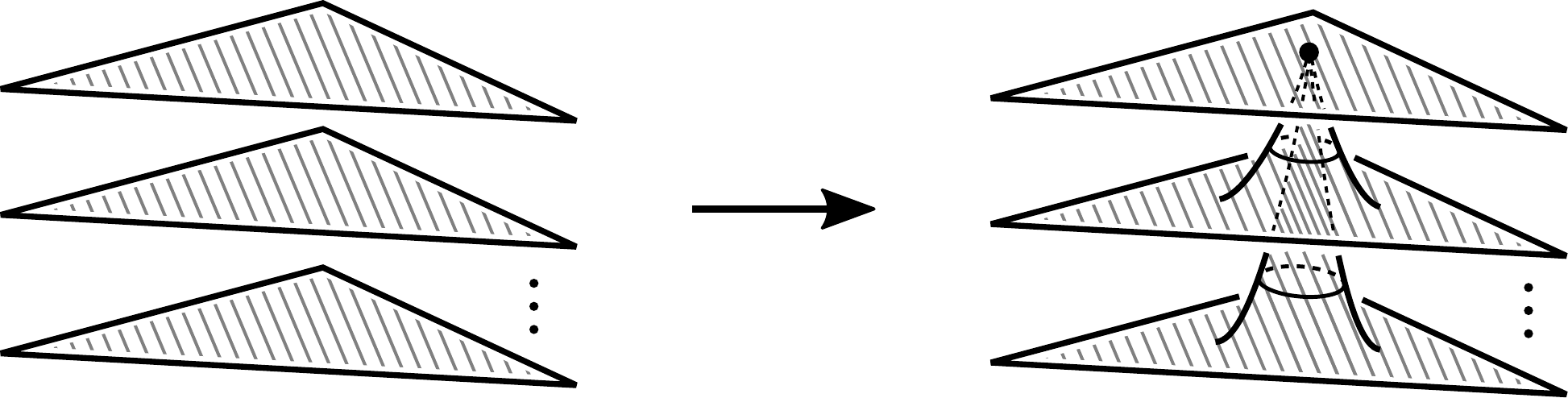}
\caption{Pinch!}\label{fig:pinch}
\end{figure}

\emph{Step 2.} For convenience, we assume that the 1-skeleton $\Gamma$ of $C$ is connected; the general case follows similarly. Choose a collection of ``cutting edges'' $\gamma$ such that $\Gamma$ becomes a tree $\Gamma'$ after deleting the interiors of the chosen edges $\gamma$. (This implies that every PL embedding of $\Gamma'$ into a 4-manifold $X$ will have a neighborhood isotopic to a standard $B^4$, a fact we will use later.) Each cutting edge $\gamma$ has a collection of preimages in $\partial F_1$.  Since $\alpha$ is a cycle, these lifts can be grouped into 	``canceling pairs'' (as in \cite[p.~108-109]{hatcher}). For each such pair of oriented edges in $\partial F_1$ lying over $\gamma$, we attach an oriented band to $F_1$ joining the paired edges in $\partial F_1$; let $F_2$ denote the resulting 2-complex. 

\emph{Step 3.} Finally, let $F$ be obtained as the union of $F_2$ and the cone on $\partial F_2$ (which is a collection of circles).

We now show that given any PL embedding $\iota:C\hookrightarrow X$, we can construct a PL embedding $\iota':F \hookrightarrow X$ so that $[\iota'(F)]=\iota_* (\alpha)\in H_2(X;\Z)$. Since each 2-simplex $\Delta_i \subset C$ is a PL disk in $X$, we may modify the embedding $\iota$ without changing $\iota_*(\alpha)$ so that $\Delta_i$ contains at most one isolated cone point in its interior, which we may view as the point $p_i \in \Delta_i$. We claim that $\Gamma \subset X$ has a (derived) PL regular neighborhood $V \subset X$ with a retraction $r: V \to \Gamma$ such that 
\begin{enumerate}
\item $V$ is PL homeomorphic to $\natural_k S^1 \times B^3$ for $k=b_1(\Gamma)$, and

\item for each edge $\gamma$ in $ \Gamma$, there is an embedded 2-sphere $S_\gamma = \{pt\} \times S^2$ in $\partial V$ lying over $\gamma$ such that each 2-simplex $\Delta$ in $C$ containing $\gamma$ meets $S_\gamma$ in a single point.
\end{enumerate}
The existence of the neighborhood, the retraction, and the homeomorphism in (1) is standard; see \cite{bryant:PL} and the references therein. To prove (2), we recall that $V$ is  obtained by passing to the second barycentric subdivision of $X$ and  taking the union of all simplices that intersect $\Gamma$. If $\sigma$ is a 1-simplex in the interior of the (subdivided) edge $\gamma$ of $\Gamma \subset X$, then $S_\gamma= \lk(\sigma,V)$ is a PL 2-sphere embedded in $\partial V \cong \#_k S^1 \times S^2$ that is mapped to the interior of $\gamma \subset \Gamma$ under the retraction $r: V \to \Gamma$. 
  Moreover, if $\Delta$ is a 2-simplex of $C$ that contains $\gamma$, then $S_\gamma$ meets the (subdivided) 2-simplex $\Delta$ in $\lk(\sigma,\Delta)$, which is a single point in the interior of $\Delta$, as claimed.

Each 2-simplex $\Delta_i$ from the chain representing $\alpha \in H_2(C)$ meets $X \smallsetminus \mathring{V}$ in a smaller 2-disk $\Delta_i'$ obtained from $\Delta_i$ by removing a half-open collar of $\Delta_i$. We may assume $\Delta_i$ is locally flat near $\partial V$, and the intersection $\partial \Delta_i' = \Delta_i \cap \partial V$ is an embedded circle in $\partial V$. Note that if a cutting edge $\gamma$ is a face of $\Delta_i$, then the curve $\partial \Delta_i' \subset \partial V$ meets the 2-sphere $S_\gamma \subset \partial V$ lying over $\gamma$ in exactly one point. By Lemma~\ref{lem:nbd}, the properly embedded disk $\Delta_i'$ in $X \smallsetminus \mathring{V}$ has a neighborhood $N_i \cong B^4$ that meets $\partial V$ in a regular neighborhood of the curve $\partial \Delta_i'$. Moreover, it is clear that  we can choose a disjoint collection of such neighborhoods $N_1,\ldots,N_m$ of $\Delta_1',\ldots,\Delta_m'$. Next, for each $i$, replace the curve $\partial \Delta_i'$ in $\partial V$ with $n_i$ parallel copies of itself. We may now embed the pinched surface $F_1$ from Step 1 into $ \cup_{i=1}^m N_i \subset X \smallsetminus \mathring{V}$ by taking the cone on each collection of $n_i$ parallel copies of $\partial \Delta_i'  \subset \partial N_i$ inside $N_i \cong B^4$.

We will now promote this PL embedding of $F_1$ to a PL embedding of $F_2$ by attaching bands to $\partial F_1$ inside $\partial V$. For each cutting edge $\gamma$ of $\Gamma$ from Step 2, let $S_\gamma$ denote the associated 2-sphere in $\partial V$ satisfying (2). As in Step 2, the edge $\gamma$ has a collection of preimages in $\partial F_1 \subset \partial V$. Moreover, since $S$ intersected $\partial \Delta_i'$ in a single point, it intersects each of these lifts of $\gamma$ (which lie inside the parallel copies of $\partial \Delta_i'$) in $\partial F_1$ in a single point. Thus we may attach the bands from Step 2 to these lifts in an embedded manner in $\partial V$: Any two paired lifts of $\gamma$ in $\partial F_1 \subset \partial V$ intersect $S_\gamma$ in a pair of points. Choose a disjoint collection of embedded paths $\ell$ in $S_\gamma$ such that each path joins two paired intersection points.  For each such path $\ell$, form a band $\ell \times [-\epsilon,\epsilon]$ in a bicollar neighborhood $S_\gamma \times [-\epsilon,\epsilon]\subset \partial V$ joining the given paired lifts of $\gamma$. The union of $F_1$ and these bands gives a PL embedding of $F_2$ in $X \smallsetminus \mathring{V}$.  Note that $\partial F_2$ is disjoint from each such 2-sphere $S_\gamma \subset \partial V$. 

Since $V \cong \natural_k S^1 \times B^3$ is standard as a neighborhood of the graph $\Gamma$,  we may choose properly embedded PL 3-balls $B_\gamma$ in $V \cong \natural_k S^1 \times B^3$ with boundary $\partial B_\gamma = S_\gamma$ such that $B_\gamma$ intersects $\gamma$ in a single point in its interior. (Note that we may need to further subdivide $B_\gamma$ and $V$ to make the embedding piecewise linear.)   By construction, $\partial F_2$ is disjoint from $S_\gamma$ and hence from $B_\gamma$. Moreover, each such 3-ball $B_\gamma$ has an open bicollar neighborhood $B_\gamma \times (-\epsilon,\epsilon)$ meeting $\partial V$ in a neighborhood $S_\gamma \times (-\epsilon,\epsilon)$ disjoint from $\partial F_2$. For each cutting edge $\gamma$, we remove the neighborhood $B_\gamma \times (-\epsilon,\epsilon)$ from $V$. This turns $V$ into a neighborhood $V' \cong B^4$ of the  tree $\Gamma'$ obtained by removing the interior of each cutting edge $\gamma$ from $\Gamma$. Now observe that $\partial F_2$ lies in $\partial V'$. Thus we may embed $F$ into $X$ by taking the union of $F_2 \subset X$ and the cone on $\partial F_2 \subset \partial V'$ inside $V' \cong B^4$. It is straightforward to check that the image of this PL embedding $\iota': F \hookrightarrow X$ represents $\iota_*(\alpha)$ in $H_2(X)$.
\end{proof}

\begin{proof}[Proof of Theorem~\ref{thm:2-complex}]
Given a piecewise-linear 2-complex $C$, we begin by constructing a Stein domain $X$ in $\cc^2$ that is homotopy equivalent to $C$. For convenience, we assume that $C$ is connected. Start with a collection of 0-handles and 1-handles corresponding to the 0- and 1-skeleta of $C$. For each 2-simplex in $C$, let $\gamma$ denote the attaching loop in the 1-skeleton. Choose a knot $K$ in the boundary of the 0- and 1-handles representing the free homotopy class of $\gamma$, and attach a 0-framed 2-handle along $K$. The resulting 4-manifold $X$ has the homotopy type of $C$. Let $\alpha$ denote a fixed generator of $H_2(X;\Z)\cong H_2(C;\Z)\cong \Z$.

We  now further modify $X$ so that it admits the structure of a Stein domain and embeds smoothly in $S^4$. In a handle diagram with dotted circle notation, change crossings of the attaching curves of the 2-handles as needed so that they form an unlink when the dotted circles are erased. Since this corresponds to a homotopy of the attaching maps, the homotopy type of $X$ is unchanged. For convenience, we will also replace $X$ with the boundary connected sum of $X$ and the (contractible) Mazur cork $M$. 

Next put the handle diagram for $X$ in standard position (as in \cite{gompf:stein} and Remark~\ref{rem:framings} above) using any choice of Legendrian representatives of the attaching curves. As in the proof of Theorem \ref{thm:detail}(c), for each attaching curve $K$, we may further modify $X$ (without changing its homotopy type) by performing a sequence of clasp moves across the 1-handle of $M \subset X$ and Legendrian stabilizations of $K$ until $tb(K)=1$. It is straightforward to see that we can do this  in such a way that the attaching curves still form an unlink when the 1-handles are erased. Since  all attaching curves are zero-framed, the modified 4-manifold $X$ admits the structure of a Stein domain. Finally, with $\alpha$ denoting the preferred generator of $H_2(X) \cong \zz$, choose an attaching curve $J$ in the diagram such that any embedded surface representing $\alpha$ has nonzero algebraic intersection number with the cocore of the 2-handle attached along $J$. (Such a 2-handle always exists when $\alpha \in H_2(X)$ is nonzero.) For any integer $k>0$, we again modify $X$ by performing additional clasp moves of $J$ across the 1-handle in $M \subset X$ (followed by pairs of positive stabilizations) until we achieve $\langle c_1(X),\alpha \rangle \geq 2k$.  As in \S\ref{sec:fake-traces}, we may do this in such a way that all 2-handles remain unlinked. Thus the adjunction inequality \eqref{eq:adjunction} for Stein domains ensures that $\alpha$ cannot be represented by a smoothly embedded surface of genus less than $k$. Let $X_k$ denote the resulting Stein domain.

As before, we see that $X_k$ embeds smoothly in $S^4$ by attaching 2-handles corresponding to 0-framed meridians of the dotted circles in the handle diagram of $X_k$; after canceling, the diagram consists of a 0-framed unlink. Attaching 3-handles and a 4-handle yields $S^4$.

Now suppose that $C$ embeds as a piecewise-linear spine of each $X_k$. By Proposition~\ref{prop:pinched}, there exists a fixed pinched surface $F$ such that $\alpha \in H_2(X_k)$ is represented by a piecewise-linearly embedded copy of $F$ in $X_k$. Let $\gamma$ be an embedded path in $F \subset X_k$ which contains all non-manifold points and cone points, chosen to lie in a fixed sheet of $F$ near each non-manifold point. A sufficiently small neighborhood $V \cong B^4$ of this path encloses a collection $\Delta$ of singular pinched disks in $F$, and $F'=F \smallsetminus \mathring{\Delta}=F \cap (S^4 \smallsetminus \mathring{V})$ is a smooth, properly embedded surface in $B^4 \cong S^4 \smallsetminus \mathring{V}$. Moreover, the diffeomorphism type of $F'$ depends only on $F$. We may  replace the singular region $\Delta \subset B^4$ with a copy of (the mirror of) $F'$; this replaces $F$ with a smoothly embedded surface $F'' \subset X_k$ diffeomorphic to the double of $F'$, also representing $\alpha \in H_2(X_k)$. It follows that we must have $k \leq g(F'')$, where the upper bound $g(F'')$ is independent of $k$. That is, for all but finitely many $k>0$, $X_k$ cannot contain $C$ as a piecewise-linear spine.
\end{proof}

\bibliographystyle{gtart}
\bibliography{not-biblio}

\end{document}